\documentclass[11pt,times,twoside]{article}

\usepackage[leqno]{amsmath}
\usepackage{graphicx}
\usepackage{latexsym}
\usepackage{amsmath,amsfonts,amssymb,amsthm,euscript,makeidx,color,mathrsfs}
\usepackage{enumerate}

\usepackage[colorlinks,linkcolor=blue,anchorcolor=green,citecolor=red]{hyperref}


\oddsidemargin  = 0pt \evensidemargin = 0pt \marginparwidth = 1in
\marginparsep   = 0pt \leftmargin     = 1.25in \topmargin =0pt
\headheight     = 0pt \headsep        = 1.5em \topskip =0pt
\footskip       =0.35in \textheight   = 9in \textwidth =6.5in

\def\sqr#1#2{{\vcenter{\vbox{\hrule height.#2pt
              \hbox{\vrule width.#2pt height#1pt \kern#1pt \vrule width.#2pt}
              \hrule height.#2pt}}}}
%

%


\def\5n{\negthinspace \negthinspace \negthinspace \negthinspace \negthinspace }
\def\4n{\negthinspace \negthinspace \negthinspace \negthinspace }
\def\3n{\negthinspace \negthinspace \negthinspace }
\def\2n{\negthinspace \negthinspace }
\def\1n{\negthinspace }

\def\dbE{\mathbb{E}}

\def\dbP{\mathbb{P}}

\def\dbR{\mathbb{R}}




\def\ds{\displaystyle}

\def\ns{\noalign{\ss}}
%
%
\def\a{\alpha}
\def\b{\beta}
\def\g{\gamma}

\def\o{\omega}
\def\p{\phi}
\def\i{\infty}
%
%

\def\Om{\Omega}
%
%

\def\cF{{\cal F}}

\def\cV{{\cal V}}

%
%

%

%
%

%
\def\ba{\bar{a}}

\def\bc{\bar{c}}

\def\be{\bar{e}}

\def\bl{\bar{l}}

\def\bp{\bar{p}}

\def\br{\bar{r}}

\def\bt{\bar{t}}

\def\ss{\smallskip}
\def\ms{\medskip}

\def\qq{\qquad}

%
%

\def\ra{\mathop{\rightarrow}}


\def\rf{\eqref}

\def\({\Big (}
\def\){\Big )}
\def\[{\Big[}
\def\]{\Big]}

\def\bde{\begin{definition}\label}
\def\ede{\end{definition}}
\def\be{\begin{equation}}
\def\bel{\begin{equation}\label}
\def\ee{\end{equation}}
\def\bt{\begin{theorem}\label}
\def\et{\end{theorem}}
\def\bc{\begin{corollary}\label}
\def\ec{\end{corollary}}
\def\bl{\begin{lemma}\label}
\def\el{\end{lemma}}
\def\bp{\begin{proposition}\label}
\def\ep{\end{proposition}}
\def\bas{\begin{assumption}\label}
\def\eas{\end{assumption}}
\def\br{\begin{remark}\label}
\def\er{\end{remark}}
\def\bex{\begin{example}\label}
\def\ex{\end{example}}
\def\ba{\begin{array}}
\def\ea{\end{array}}
\def\ed{\end{document}}
\def\ts{\times}
\def\olb\
\def\eps{\epsilon}
\def\square#1{\vbox{\hrule\hbox{\vrule height#1%
     \kern#1\vrule}\hrule}}
\def\rectangle#1#2{\vbox{\hrule\hbox{\vrule height#1%
     \kern#2\vrule}\hrule}}


\font\tenbb=msbm10 \font\sevenbb=msbm7 \font\fivebb=msbm5

\newfam\bbfam
\scriptscriptfont\bbfam=\fivebb \textfont\bbfam=\tenbb
\scriptfont\bbfam=\sevenbb

\newtheorem{theorem}{\hskip 1.3em Theorem}[section]
\newtheorem{definition}[theorem]{\hskip 1.3em Definition}
\newtheorem{proposition}[theorem]{\hskip 1.3em Proposition}
\newtheorem{corollary}[theorem]{\hskip 1.3em Corollary}
\newtheorem{lemma}[theorem]{\hskip 1.3em Lemma}
\newtheorem{remark}[theorem]{\hskip 1.3em Remark}
\newtheorem{example}[theorem]{\hskip 1.3em Example}

\newtheorem{assumption}[theorem]{\hskip 1.3em Assumption}

\makeatletter
   
   \@addtoreset{equation}{section}
\makeatother

\begin{document}

\title{\Large \bf Symmetrical martingale solutions of backward doubly stochastic Volterra integral equations\footnotemark[1]}

\author
{\textbf{Jiaqiang Wen}$^{1,2}$,~~~~ \textbf{Yufeng Shi}$^{1,\dagger}$ \\
\normalsize{$^{1}$Institute for Financial Studies and School of Mathematics,}\\
\normalsize{Shandong University, Jinan 250100, Shandong, China}\\
\normalsize{$^{2}$Department of Mathematics, Southern University of Science and Technology,} \\
\normalsize{Shenzhen 518055, Guangdong, China}\\
}

\date{}
\renewcommand{\thefootnote}{\fnsymbol{footnote}}

\footnotetext[1]{This work is supported by National Natural Science Foundation of China (Grant Nos. 11871309, 11371226, 11671229, 11071145, 11526205, 11626247 and 11231005), the Foundation for Innovative Research Groups of National Natural Science Foundation of China (Grant No. 11221061) and the 111 Project (Grant No. B12023), the National Key R\&D Program of China (Grant No. 2018YFA0703900)}

\footnotetext[2]{Corresponding author. E-mail address: jqwen59@gmail.com (J. Wen), yfshi@sdu.edu.cn (Y. Shi).}

\maketitle

\begin{abstract}
This paper aims to study a new class of integral equations called backward doubly stochastic Volterra integral equations (BDSVIEs, for short).
The notion of symmetrical martingale solutions (SM-solutions, for short) is introduced for BDSVIEs.
And the existence and uniqueness theorem for BDSVIEs in the sense of SM-solutions is established.
\end{abstract}

\textbf{Keywords}:
Backward doubly stochastic Volterra integral equation; Backward stochastic integral; Symmetrical martingale solution.

\vspace{3mm}

\textbf{2010 Mathematics Subject Classification}: 60H10; 60H20

\section{Introduction}

Throughout this paper, let $(\Omega,\mathcal{F},P)$ be a probability space and $T>0$ be a fixed terminal time.
Let $\{W_{t};0\leq t\leq T\}$ and $\{B_{t};0\leq t\leq T\}$ be two mutually independent standard Brownian motion processes,
 with values respectively in $\mathbb{R}^{d}$ and in $\mathbb{R}^{l}$, defined on $(\Omega,\mathcal{F},P)$.
Let $\mathcal{N}$ denote the class of $P$-null sets of $\mathcal{F}$.
For each $t\in [0,T]$, we define
 \begin{equation*}
 \mathcal{F}_{t}\triangleq\mathcal{F}_{t}^{W} \vee \mathcal{F}_{t,T}^{B},
 \end{equation*}
 where for any process $\{\eta_{t}\}, \ \mathcal{F}_{s,t}^{\eta}=\sigma\{\eta_{r}-\eta_{s};s\leq r\leq t\}\vee \mathcal{N}$ and
 $\mathcal{F}_{t}^{\eta}=\mathcal{F}_{0,t}^{\eta}$.
Denote $\Delta=\{ (t,s)\in[0,T]^{2}| \ t\leq s\}$ and $\Delta^{c}=\{ (t,s)\in[0,T]^{2}| \ s < t\}$.
Consider the following integral equation:
\begin{equation}\label{46}
 Y(t)=\psi(t) + \int_t^T f(t,s,Y(s),Z(t,s),Z(s,t)) ds  - \int_t^T Z(t,s) dW_{s}, \ \ t\in [0,T],
\end{equation}
where $f(\omega,t,s,y,z,z'):\Omega\times \Delta \times \mathbb{R}^{k}\times \mathbb{R}^{k\times d}\times
\mathbb{R}^{k\times d}\rightarrow \mathbb{R}^{k}$
 is an $\mathcal{F}^{W}_{s}$-adapted process
and $\psi(\omega,t):\Omega\times [0,T]\rightarrow \mathbb{R}^{k}$ is an $\mathcal{F}^{W}_{T}$-measurable process.
Such an equation is referred to as a backward stochastic Volterra integral equation (BSVIE, for short) introduced by Yong in \cite{Yong2, Yong4}.
A special case of (\ref{46}) with $f(\cdot)$ independent of $Z(s,t)$ and $\psi(t)\equiv \xi$ was studied in \cite{Lin} a little earlier.
Since then,
there are many applications of BSVIEs among stochastic optimal control problems \cite{Shi4},
risk management \cite{Yong3} and capital allocations \cite{Eduard}.
Some other recent developments of BSVIEs can be found in
\cite{Agram},
\cite{Anh},
\cite{Djordjevic},
\cite{Ren},
\cite{Shi2},
\cite{Wang3},
\cite{Yong},
\cite{Wangz},
\cite{Yong5},
\cite{Zhangx},
and so on.
As interpreted in \cite{Yong4}, in order to guarantee the uniqueness of solutions of BSVIE (\ref{46}),
some additional constraints should be imposed on $Z(t,s)$ as $(t,s)\in \Delta^{c}$.
Then Yong \cite{Yong4} introduced the martingale solutions (M-solutions, for short) for BSVIE (\ref{46}).
Also the symmetrical solutions (S-solutions, for short) of (\ref{46}) was introduced in \cite{Wang2}.

On the other hand, Pardoux and Peng introduced so-called backward doubly stochastic differential equations (BDSDEs, for short) in \cite{Peng} as follows:
\begin{equation}\label{BDSDE}
\begin{split}
  Y(t)=&\ \xi + \int_t^T f(s,Y(s),Z(s)) ds\\
       &+ \int_t^T g(s,Y(s),Z(s)) d\overleftarrow{B}_{s}- \int_t^T Z(s) dW_{s}, \qq t\in [0,T].
\end{split}
\end{equation}
Since then, there are many literatures on the theory of BDSDEs.
For example, \cite{Han}, \cite{Shi3}, \cite{Shi}, \cite{Shi5, Shi6}, \cite{Shi15}, etc.,
have developed the theory and applications of BDSDEs.

In this paper, motivated by the above works, we study the following stochastic integral equation:
\begin{equation}\label{1}
\begin{split}
  Y(t)=&\ \psi(t) + \int_t^T f(t,s,Y(s),Z(t,s),Z(s,t)) ds\\
       &+ \int_t^T g(t,s,Y(s),Z(t,s),Z(s,t)) d\overleftarrow{B}_{s}- \int_t^T Z(t,s) dW_{s}, \qq t\in [0,T],
\end{split}
\end{equation}
where
$f(\omega,t,s,y,z,z'):\Omega\times \Delta \times \mathbb{R}^{k}\times \mathbb{R}^{k\times d}\times \mathbb{R}^{k\times d}\rightarrow \mathbb{R}^{k}$ and
$g(\omega,t,s,y,z,z'):\Omega\times \Delta \times \mathbb{R}^{k}\times \mathbb{R}^{k\times d}\times \mathbb{R}^{k\times d}
\rightarrow \mathbb{R}^{k\times d}$
are $\mathcal{F}_{s}$-measurable,
and $\psi(\omega,t):\Omega\times [0,T]\rightarrow \mathbb{R}^{k}$ is $\mathcal{F}_{T}$-measurable.
We call this new type of equations (\ref{1}) as backward doubly stochastic Volterra integral equations (BDSVIEs, for short).
Obviously BDSVIE (\ref{1}) is a generalization of both BSVIE (\ref{46}) and BDSDE (\ref{BDSDE}).

Comparing with BSVIE (\ref{46}), we notice that there are two independent Brownian motions $W(t)$ and $B(t)$ in BDSVIE (\ref{1}),
 where the $dW$-integral is a forward It\^{o}'s integral and the $d\overleftarrow{B}$-integral is a backward It\^{o}'s integral.
 The extra noise $B$ in the equation could describe some extra information that cannot be detected in practice, such as insider information in a financial market,
 which is available only for some investors.
However, the extra term $d\overleftarrow{B}$ would bring some extra difficulties.
 We will overcome the difficulties by finding some suitable assumptions on the coefficient $g$.
 For BDSVIE (\ref{1}), similar to BSVIE (\ref{46}), in order to guarantee the uniqueness of solutions,
 some restrictions also should be imposed on $Z(t,s)$ as $(t,s)\in \Delta^{c}$.
However, $Y(\cdot)$ and $Z(t,\cdot)$ are $\mathcal{F}$-measurable, not $\mathcal{F}^{W}$-adapted, so the measurability of solutions for BDSVIEs is extremely complicated.
Therefore we have to introduce a new definition of measurable solutions for BDSVIE (\ref{1}).
In this paper we firstly introduce the notion of symmetrical martingale solutions (SM-solutions, for short) for BDSVIEs.
It is worth noting that the SM-solutions are different from both the M-solutions (refer to \cite{Yong4}) and the S-solutions (refer to \cite{Wang2}) in the theory of BSVIEs.
Then we can luckily establish the existence and uniqueness theorem for BDSVIEs in the sense of SM-solutions.
It is worth to point out that BDSVIE (\ref{1}) could be applied in many fields such as mathematical finance, risk management and stochastic optimal controls and so on.
And we expect to study more applications of BDSVIEs and more properties of the SM-solutions in the future works.
The connection between BDSVIEs and stochastic PDEs is another important issue in our future investigations.

The remaining of this paper is organized as follows.
In Section 2, some notations and preliminary results are presented.
We introduce the SM-solutions for BDSVIEs in Section 3. And the existence and uniqueness of BDSVIEs in the sense of SM-solutions is proved in this section.
For the conciseness of the paper, in Section 4, as an appendix we show a detail proof for the backward martingale representation theorem which will play an important role in this paper.

\section{Preliminaries}

$\mathbf{Notation.}$ The Euclidean norm of a vector $x\in\mathbb{R}^{k}$ will be denoted by $|x|$,
and for a $k\times d$ matrix $A$, we define $\| A \|=\sqrt{Tr AA^{\ast}}$.
For simplicity, let $d=l$ and for $H=\mathbb{R}^{k}, \ \mathbb{R}^{k\times d}$, $t\in[0,T]$, denote
\begin{itemize}
  \item [$\bullet$] $L^{2}(\mathcal{F}_{T};H)
        =\{\xi:\Omega\rightarrow H \mid \xi$ is $\mathcal{F}_{T}$-measurable, $E[|\xi|^{2}]< \infty\}$;
  \item [$\bullet$] $L_{\mathcal{F}_{T}}^2(0,T;H)
 =\big \{\psi:\Omega\times [0,T]\rightarrow H \mid \psi(t)$ is  $\mathcal{F}_{T}$-measurable,
        $E\int_0^T |\psi(t)|^{2} dt < \infty\big \}$;
  \item [$\bullet$] $L_{\mathcal{F}}^2(0,T;H)
        =\big \{X:\Omega\times [0,T]\rightarrow H \mid X(t)$ is  $\mathcal{F}_{t}$-measurable,
        $E\int_0^T |X(t)|^{2} dt$ $< \infty\big \}$;
   \item [$\bullet$] $S_{\mathcal{F}}^2(0,T;H)
  =\big\{X:\Omega\times [0,T]\rightarrow H \mid X(t)$ is an $\mathcal{F}_{t}$-measurable process, has continuous paths,
  $E\big(\sup\limits_{0\leq t\leq T}|X(t)|^{2}\big)< \infty\big\}$;
  \item [$\bullet$] $L_{\mathcal{F}}^2(\Delta;H)
        =\big\{Z:\Omega\times \Delta \rightarrow H  \mid Z(t,s)$ is $\mathcal{F}_{s}$-measurable,
        $E\int_0^{T} \int_t^{T} |Z(t,s)|^{2} dsdt$ $< \infty\big\}$;
  \item [$\bullet$] $L_{\mathcal{F}}^2([0,T]^{2};H)
        =\big\{Z:\Omega\times [0,T]\times [0,T] \rightarrow H  \mid Z(t,s)$ is an $\mathcal{F}_{s}$-measurable process,
        $E\int_0^{T} \int_0^{T} |Z(t,s)|^{2} dsdt$ $< \infty\big\}$.
\end{itemize}
Similarly, we can define $L^{2}_{\mathcal{F}}(\Delta^{c};H), L^{2}_{\mathcal{F}^{W}}(\Delta^{c};H), L^{2}_{\mathcal{F}^{B}}(\Delta;\mathbb{R}^{k\times d})$, etc.
Also, let
\begin{equation*}
\mathcal{H}_{\Delta}^{2}[0,T]=L_{\mathcal{F}}^2(0,T;\mathbb{R}^{k})\times L_{\mathcal{F}}^2(\Delta;\mathbb{R}^{k\times d}); \ \
\mathcal{H}^{2}[0,T]=L_{\mathcal{F}}^2(0,T;\mathbb{R}^{k})\times L_{\mathcal{F}}^2([0,T]^{2};\mathbb{R}^{k\times d}).
\end{equation*}
It's easy to see that $\mathcal{H}_{\Delta}^{2}[0,T]$ and $\mathcal{H}^{2}[0,T]$ are Hilbert spaces.

 \begin{itemize}
   \item [(H1)]
   Let
\begin{equation*}
\begin{split}
      &f:\Omega\times [0,T]\times \mathbb{R}^{k}\times \mathbb{R}^{k\times d}\rightarrow \mathbb{R}^{k};\\
      &g:\Omega\times [0,T]\times \mathbb{R}^{k}\times \mathbb{R}^{k\times d}\rightarrow \mathbb{R}^{k\times d},
\end{split}
\end{equation*}
be jointly measurable such that for any $(y,z)\in \mathbb{R}^{k}\times \mathbb{R}^{k\times d}$,
\begin{equation*}
  f(\cdot,y,z)\in L_{\mathcal{F}}^{2}(0,T; \mathbb{R}^{k}), \ \ g(\cdot,y,z)\in L_{\mathcal{F}}^{2}(0,T; \mathbb{R}^{k\times d}).
\end{equation*}
Moreover, we assume that there exist constants $c>0$ and $0<\alpha<1$ such that
 for any $(\omega,t)\in \Omega\times [0,T],(y,z),(y',z')\in \mathbb{R}^{k}\times \mathbb{R}^{k\times d}$,
\begin{equation*}
 \begin{cases}
   |f(t,y,z)-f(t,y',z')|^{2}\leq c(|y-y'|^{2} + \|z-z'\|^{2});\\
   \|g(t,y,z)-g(t,y',z')\|^{2}\leq c|y-y'|^{2} + \alpha \|z-z'\|^{2}.\\
 \end{cases}
\end{equation*}
 \end{itemize}
The following proposition is from Pardoux and Peng \cite{Peng}.

\begin{proposition}\label{3}
Let (H1) hold, then for any $\xi\in L^2(\mathcal{F}_{T};\mathbb{R}^{k})$, the following BDSDE
\begin{equation}\label{48}
   Y(t)=\xi + \int_t^T f(s,Y(s),Z(s)) ds + \int_t^T g(s,Y(s),Z(s)) d\overleftarrow{B}_{s} - \int_t^T Z(s) dW_{s}, \qq t\in[0,T],
\end{equation}
admits a unique solution
$(Y,Z)\in S_{\mathcal{F}}^{2}(0,T; \mathbb{R}^{k})\times L_{\mathcal{F}}^{2}(0,T; \mathbb{R}^{k\times d}).$
\end{proposition}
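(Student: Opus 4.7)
The plan is to prove existence and uniqueness by a Picard iteration argument in the Hilbert space $\sM^2 \triangleq S_{\cF}^{2}(0,T;\dbR^{k})\times L_{\cF}^{2}(0,T;\dbR^{k\times d})$, equipped with a $\beta$-weighted norm
\[
 \|(Y,Z)\|_\beta^2 = E\int_0^T e^{\beta t}\bigl(|Y(t)|^2 + \|Z(t)\|^2\bigr)\,dt,
\]
for $\beta>0$ chosen sufficiently large. First I would fix $(y,z)\in\sM^2$ and consider the square-integrable, $\cF_T$-measurable random variable $\eta \triangleq \xi + \int_0^T f(s,y(s),z(s))\,ds + \int_0^T g(s,y(s),z(s))\,d\olB_s$. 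The martingale $M(t)=E[\eta\mid\cF_t]$ is adapted to the doubly stochastic filtration $\cF_t=\cF_t^W\vee\cF_{t,T}^B$, and by the backward martingale representation theorem (whose detailed proof is deferred to the Appendix) one can write $M(t)=M(0)+\int_0^t Z(s)\,dW_s$ for a unique $Z\in L_{\cF}^2(0,T;\dbR^{k\times d})$. Defining
\[
 Y(t)\triangleq \xi + \int_t^T f(s,y(s),z(s))\,ds + \int_t^T g(s,y(s),z(s))\,d\olB_s - \int_t^T Z(s)\,dW_s,
\]
this produces a map $\Phi:(y,z)\mapsto(Y,Z)$ on $\sM^2$, and the claim is that for $\beta$ large enough $\Phi$ is a contraction.

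Next I would derive the key \emph{a priori} estimate by applying It\^o's formula to $e^{\beta t}|Y(t)-Y'(t)|^2$, where $(Y,Z)=\Phi(y,z)$ and $(Y',Z')=\Phi(y',z')$. The specific feature of the doubly stochastic It\^o formula is that the quadratic variation of the backward integral contributes a \emph{positive} term $\int_t^T\|g(s,y,z)-g(s,y',z')\|^2 ds$ on the \emph{right} hand side, with the \emph{opposite} sign of the one produced by the forward integral. Using (H1) this term is bounded by $c|y-y'|^2+\alpha\|z-z'\|^2$, and since $\alpha<1$ the $\|Z-Z'\|^2$ contribution on the left dominates. Taking expectations and applying Young's inequality yields an estimate of the form
\[
 E\int_0^T e^{\beta t}\bigl(|Y-Y'|^2+\|Z-Z'\|^2\bigr)dt \le \frac{C}{\beta}\,E\int_0^T e^{\beta t}|y-y'|^2 dt + \frac{1+\alpha}{2}\,E\int_0^T e^{\beta t}\|z-z'\|^2 dt,
\]
where the factor $(1+\alpha)/2<1$ is the crucial gain coming from $\alpha<1$. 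Choosing $\beta$ large makes $\Phi$ a strict contraction on $\sM^2$, and the Banach fixed point theorem produces a unique $(Y,Z)$ satisfying the equation.

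Finally, I would upgrade regularity: a further application of It\^o's formula together with the Burkholder--Davis--Gundy inequality shows that $E\sup_{t\in[0,T]}|Y(t)|^2<\infty$ and that $Y$ has a continuous modification, placing $Y$ in $S_\cF^2(0,T;\dbR^k)$. Uniqueness follows from the same contraction estimate applied to any two solutions.

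The main obstacle is the proper handling of the mixed forward/backward stochastic calculus: one must justify the It\^o formula in a setting where $dW$ is adapted to the increasing filtration $\cF_t^W$ while $d\olB$ is adapted to the decreasing filtration $\cF_{t,T}^B$, and in particular verify that the cross variation between the two integrals vanishes. Equally delicate is the backward martingale representation with respect to the filtration $\cF_t=\cF_t^W\vee\cF_{t,T}^B$, which is not a standard Brownian filtration; this is precisely why the authors dedicate Section 4 to its proof. Once these two ingredients are in place, the contraction argument proceeds as in the classical BSDE/BDSDE theory.
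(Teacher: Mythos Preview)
The paper does not prove this proposition at all: it is quoted verbatim as a preliminary result from Pardoux--Peng \cite{Peng}, with no argument supplied. Your sketch is essentially the original Pardoux--Peng contraction argument (Picard iteration in a $\beta$-weighted $L^2$ norm, doubly stochastic It\^o formula, the sign reversal from the backward integral absorbed thanks to $\alpha<1$), so in substance it is correct and matches the source the paper cites.

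One point of confusion worth flagging: the martingale representation you invoke here is \emph{not} the one proved in the paper's Appendix. What you need for the map $\Phi$ is the \emph{extended forward} representation: every square-integrable $\cF_t$-martingale (with $\cF_t=\cF_t^W\vee\cF_{t,T}^B$) can be written as $M(0)+\int_0^t Z(s)\,dW_s$. That result is in Pardoux--Peng \cite{Peng} and relies on the independence of $W$ and $B$. The Appendix of the present paper proves something different, namely the \emph{backward} representation of $\cF^B_{t,T}$-martingales as $\dbE[M_T]+\int_t^T f_s\,d\overleftarrow{B}_s$; this is used only in Section~3.1 to build the SM-solution structure, not here. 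So your parenthetical ``whose detailed proof is deferred to the Appendix'' misattributes the tool.
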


\section{Main results}

In this section, we study the existence and uniqueness result for BDSVIEs. Firstly, we introduce the symmetrical martingale solution.

\subsection{Symmetrical martingale solution}

Consider the general type of BDSVIEs as follows:
\bel{55}\ba{ll}
\ds Y(t)= \psi(t) + \int_t^T f(t,s,Y(s),Z(t,s),Z(s,t)) ds\\
\ns\ds\qq \ \ \ \ + \int_t^T g(t,s,Y(s),Z(t,s),Z(s,t))d\overleftarrow{B}_{s}-\int_t^T Z(t,s) dW_{s}, \qq t\in [0,T],
\ea\ee
where
$f(\omega,t,s,y,z,\zeta):\Omega\times \Delta\times \mathbb{R}^{k}\times \mathbb{R}^{k\times d}\times \mathbb{R}^{k\times d}\rightarrow \mathbb{R}^{k}$
and
$g(\omega,t,s,y,z,\zeta):\Omega\times \Delta\times \mathbb{R}^{k}\times \mathbb{R}^{k\times d}\times \mathbb{R}^{k\times d}\rightarrow \mathbb{R}^{k\times d}$
are $\mathcal{F}_{s}$-measurable given maps,
and $\psi(\omega,t):\Omega\times [0,T]\rightarrow \mathbb{R}^{k}$ is $\mathcal{F}_{T}$-measurable.
The purpose is to prove BDSVIE (\ref{55}) admits a unique solution $(Y(\cdot),Z(\cdot,\cdot))$ in $\mathcal{H}^{2}[0,T]$.
As showed in Yong \cite{Yong4}, for the sake of the uniqueness of  solutions, some additional constraints should be imposed on $Z(t,s)$ for $(t,s)\in \Delta^{c}$.
In order to do this, we introduce the symmetrical martingale solutions for BDSVIE \rf{55}.

For any fixed $t\in[0,T]$, we define
$$
 M_{1}(r)\triangleq E\big[\big(Y(t)-EY(t)\big)\mid\mathcal{F}^{W}_{r}\big], \qq
 M_{2}(\overline{r})\triangleq E\big[\big(Y(t)-EY(t)\big)\mid\mathcal{F}^{B}_{\overline{r},T}\big],
$$
where $0\leq r\leq t \leq \overline{r}\leq T$. It is easy to see that
$M_{1}(r)$ is a martingale with respect to $\mathcal{F}^{W}_{r}$ and
$M_{2}(\overline{r})$ is a backward martingale with respect to $\mathcal{F}^{B}_{\overline{r},T}$.
Then from the forward and backward martingale representation theorems (see Theorem \ref{0.5}),
there exists a unique pair
$(X_{1}(t,\cdot),X_{2}(t,\cdot))$ (parameterized by $t\in[0,T]$) belongs to
$ L^{2}_{\mathcal{F}^{W}}(0,t;\mathbb{R}^{k\times d})\times L^{2}_{\mathcal{F}^{B}}(t,T;\mathbb{R}^{k\times d})$ such that
\bel{51.1}\left\{\ba{ll}
\ds M_{1}(r)=EM_1(0)+\int\limits_0^r X_{1}(t,s) d W_{s}, \qq r\in[0,t]; \\
\ns\ds M_{2}(\overline{r})=EM_{2}(T)+\int\limits_{\overline{r}}^T X_{2}(t,s) d \overleftarrow{B}_{s}, \qq \overline{r}\in[t,T].\\
\ea\right.\ee
In particular, when $r=t$ and $\overline{r}=t$, and note that $EM_1(0)=0$ and $EM_{2}(T)=0$, we obtain
\bel{51}\left\{\ba{ll}
\ds  E[Y(t)\big| \mathcal{F}^{W}_{t}]=EY(t)+\int\limits_0^t X_{1}(t,s) d W_{s}, \qq (t,s)\in \Delta^{c}; \\
\ns\ds E[Y(t)\big| \mathcal{F}^{B}_{t,T}]=EY(t)+\int\limits_t^T X_{2}(t,s) d \overleftarrow{B}_{s}, \qq (t,s)\in \Delta.\\
\ea\right.\ee
To close the gap of $X_{1}(\cdot,\cdot)$ in $\Delta$ and the values of $X_{2}(\cdot,\cdot)$ in $\Delta^{c}$ by symmetry:
\begin{equation}\label{53}
 \begin{cases}
  X_{1}(t,s)= X_{1}(s,t), \qq (t,s)\in \Delta; \\
  X_{2}(t,s)= X_{2}(s,t), \qq (t,s)\in \Delta^{c}.\\
  \end{cases}
\end{equation}
Then
$$(X_{1}(\cdot,\cdot),X_{2}(\cdot,\cdot))\in  L^{2}_{\mathcal{F}^{W}}([0,T]^{2};\mathbb{R}^{k\times d})\times L^{2}_{\mathcal{F}^{B}}([0,T]^{2};\mathbb{R}^{k\times d}).$$
Now define the values of
$Z(\cdot,\cdot)$ on $(t,s)\in \Delta^{c}$ by:
\begin{equation}\label{54}
    Z(t,s)=  X_{1}(t,s) + X_{2}(t,s), \qq (t,s)\in \Delta^{c}.
\end{equation}
It is easy to check that when $(t,s)\in \Delta^{c}$, $X_{1}(t,s)$ is $\mathcal{F}^{W}_{s}$-adapted
and $X_{2}(t,s)$ is $\mathcal{F}^{B}_{s,T}$-adapted,
hence $Z(t,s)$ is $\mathcal{F}_{s}$-measurable for $(t,s)\in \Delta^{c}$.
\begin{definition}
A pair of $(Y(\cdot),Z(\cdot,\cdot))\in \mathcal{H}^{2}[0,T]$ is called a symmetrical martingale solution (or $\mathbf{SM}$-solution) of BDSVIE (\ref{55}),
if it satisfies (\ref{55}) in the usual It\^{o}'s sense and, in addition, (\ref{54}) holds.
\end{definition}
In the above, ``SM'' in ``SM-solution'' stands for a symmetrical martingale representation
(for $Y(t)$ to determine $Z(\cdot,\cdot)$ on $\Delta^{c}$).
Next, let $\mathcal{M}^{2}[0,T]$ be the set of all $(Y(\cdot),Z(\cdot,\cdot))\in \mathcal{H}^{2}[0,T]$ such that (\ref{54}) holds.
From (\ref{51})-(\ref{54}), one has
$$\ba{ll}
\ds 2E\int_0^T |Y(t)|^{2} dt\\
\ns\ds \geq E\int_0^T \int_0^t |X_{1}(t,s)|^{2} ds dt + E\int_0^T  \int_t^T  |X_{2}(t,s)|^{2} dsdt\\
\ns\ds = E\int_0^T \int_0^t  |X_{1}(t,s)|^{2} ds dt + E\int_0^T \int_0^t  |X_{2}(s,t)|^{2} ds dt\\
\ns\ds \geq \frac{1}{2}E\int_0^T  \int_0^t |Z(t,s)|^{2} ds dt,
\ea$$
then,
\begin{equation}\label{99}
  4E\int_0^T e^{\beta t}|Y(t)|^{2} dt \geq E\int_0^T e^{\beta t} \bigg(\int_0^t |Z(t,s)|^{2} ds\bigg) dt\geq E\int_0^T \int_0^t e^{\beta s} |Z(t,s)|^{2} ds dt.
\end{equation}
Hence we deduce the following inequality,
\begin{equation*}
       E\int_0^{T} \bigg(e^{\beta t}|Y(t)|^{2} + \int_0^{T} e^{\beta s}|Z(t,s)|^{2} ds\bigg) dt
 \leq 5E\int_0^{T} \bigg(e^{\beta t}|Y(t)|^{2} + \int_t^{T} e^{\beta s}|Z(t,s)|^{2} ds\bigg) dt.
\end{equation*}
This implies that we can use the following as an equivalent norm in $\mathcal{M}^{2}[0,T]$:
 \begin{equation*}
 \| (Y(\cdot),Z(\cdot,\cdot)) \|_{\mathcal{M}^{2}[0,T]}
 \equiv \bigg[ E\int_0^{T} e^{\beta t}|Y(t)|^{2} dt + E\int_0^{T}\int_t^{T} e^{\beta s}|Z(t,s)|^{2} ds dt \bigg]^{\frac{1}{2}}.
 \end{equation*}

\begin{remark}
In (\ref{54}), if we let
\begin{equation*}
Z(t,s)=X_{1}(t,s), \qq \forall (t,s)\in \Delta^{c},
\end{equation*}
then we can define the M-solution in the sense \cite{Yong4}.
However, here $Z(t,s)$ is $\mathcal{F}^{W}_{s}$-adapted.
Similarly, if
\begin{equation*}
Z(t,s)=Z(s,t), \qq \forall (t,s)\in \Delta^{c},
\end{equation*}
then we can also define the S-solution in the sense \cite{Wang2}.
\end{remark}

\subsection{Existence and uniqueness theorem}

Firstly, we consider the existence and uniqueness result of the following BDSVIE,
\begin{equation}\label{2}
\begin{split}
  Y(t)=&\ \psi(t) + \int_t^T f(t,s,Y(s),Z(t,s)) ds\\
       &+ \int_t^T g(t,s,Y(s),Z(t,s)) d\overleftarrow{B}_{s}- \int_t^T Z(t,s) dW_{s}, \qq t\in [0,T].
\end{split}
\end{equation}
For this type of BDSVIEs, since $f$ and $g$ are independent of $Z(s,t)$, we need not the notion of SM-solution, and just need to consider the measurable solution.

\begin{itemize}
  \item [$\mathbf{(H2)}$]
  Assume
\begin{equation*}
f:\Omega\times \Delta\times \mathbb{R}^{k}\times \mathbb{R}^{k\times d}\rightarrow \mathbb{R}^{k}; \ \
g:\Omega\times \Delta\times \mathbb{R}^{k}\times \mathbb{R}^{k\times d}\rightarrow \mathbb{R}^{k\times d},
\end{equation*}
be jointly measurable such that for all $(y,z)\in \mathbb{R}^{k}\times \mathbb{R}^{k\times d}$,
\begin{equation*}
f(\cdot,\cdot,y,z)\in L_{\mathcal{F}}^{2}(\Delta; \mathbb{R}^{k}), \ \
g(\cdot,\cdot,y,z)\in L_{\mathcal{F}}^{2}(\Delta; \mathbb{R}^{k\times d}).
\end{equation*}
Furthermore, there exist constants $c>0$ and $0<\alpha<\frac{1}{2}$ such that for any
$y,y'\in\mathbb{R}^{k},z,z'\in {R}^{k\times d}$ and
$(t,s) \in \Delta$,
\begin{equation*}
 \begin{cases}
   |f(t,s,y,z)-f(t,s,y',z')|^{2}\leq c(|y-y'|^{2} + \|z-z'\|^{2});\\
   \|g(t,s,y,z)-g(t,s,y',z')\|^{2}\leq \alpha(|y-y'|^{2} + \|z-z'\|^{2}).\\
 \end{cases}
\end{equation*}
\end{itemize}

\begin{theorem}\label{19}
Let $\psi\in L_{\mathcal{F}_{T}}^2(0,T;\mathbb{R}^{k})$, and suppose $f$ and $g$ satisfy the assumption (H2).
Then  BDSVIE (\ref{2}) has unique solution $(Y(\cdot),Z(\cdot,\cdot))$ $\in \mathcal{H}_{\Delta}^{2}[0,T]$,
 and the following estimate holds,
\bel{21}\ba{ll}
\ds E\int_0^T e^{\beta t} |Y(t)|^{2} dt  +  E\int_0^T\int_t^T  e^{\beta s} \|Z(t,s)\|^{2} dsdt \\
\ns\ds\leq  K\bigg[E \int_0^T e^{\beta t}|\psi(t)|^{2} dt + E\int_0^T \int_t^T e^{\beta s} |f(t,s,0,0)|^{2} dsdt
+E\int_0^T \int_t^T e^{\beta s} \|g(t,s,0,0)\|^{2} dsdt\bigg],
\ea\ee
where $K$ is a positive constant which may be different from line to line.
\end{theorem}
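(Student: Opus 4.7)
The plan is to establish existence and uniqueness by reducing BDSVIE \eqref{2} to a family of BDSDEs parameterized by $t\in[0,T]$, and then running a Picard-type contraction in the space $L_{\mathcal{F}}^{2}(0,T;\mathbb{R}^{k})$ equipped with the exponentially weighted norm $\|y\|_{\beta}^{2}=E\int_{0}^{T}e^{\beta t}|y(t)|^{2}dt$ for a sufficiently large $\beta>0$. Concretely, given a candidate process $y(\cdot)\in L_{\mathcal{F}}^{2}(0,T;\mathbb{R}^{k})$, for each fixed $t\in[0,T]$ I would solve the BDSDE on $[t,T]$
\begin{equation*}
\eta(r)=\psi(t)+\int_{r}^{T}\!f(t,s,y(s),\zeta(s))ds+\int_{r}^{T}\!g(t,s,y(s),\zeta(s))d\overleftarrow{B}_{s}-\int_{r}^{T}\!\zeta(s)dW_{s},
\end{equation*}
whose unique solution $(\eta^{t}(\cdot),\zeta^{t}(\cdot))\in S_{\mathcal{F}}^{2}(t,T;\mathbb{R}^{k})\times L_{\mathcal{F}}^{2}(t,T;\mathbb{R}^{k\times d})$ exists by Proposition \ref{3} (the data $\psi(t)$ is $\mathcal{F}_{T}$-measurable and the drivers are Lipschitz with constants $c,\alpha$, where $\alpha<\frac12<1$). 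Then I set $(\Theta y)(t)\triangleq\eta^{t}(t)$ and $Z(t,s)\triangleq\zeta^{t}(s)$ for $(t,s)\in\Delta$. A measurability argument (jointly in $(t,s,\omega)$) is needed to guarantee $Z\in L_{\mathcal{F}}^{2}(\Delta;\mathbb{R}^{k\times d})$ and $\Theta y\in L_{\mathcal{F}}^{2}(0,T;\mathbb{R}^{k})$; this follows from the standard continuous-dependence estimate for BDSDEs applied along a dense partition of $t$.

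Next I would prove that $\Theta$ is a contraction. Given $y,y'\in L_{\mathcal{F}}^{2}(0,T;\mathbb{R}^{k})$ with associated $(\Theta y,Z)$ and $(\Theta y',Z')$, apply It\^o's formula to $e^{\beta r}|\eta^{t}(r)-\eta'^{\,t}(r)|^{2}$ on $[t,T]$ (this is the Pardoux-Peng-type energy identity, accounting for both $dW$ and $d\overleftarrow{B}$ integrals, the latter giving a $+$ sign in the quadratic variation). Taking expectations, using the Lipschitz hypothesis (H2), and the inequalities $2ab\le\varepsilon a^{2}+\varepsilon^{-1}b^{2}$, one obtains an estimate of the form
\begin{equation*}
E e^{\beta t}|\eta^{t}(t)-\eta'^{\,t}(t)|^{2}+(1-2\alpha-\varepsilon)E\!\int_{t}^{T}\!e^{\beta s}\|Z(t,s)-Z'(t,s)\|^{2}ds\le C_{\varepsilon}E\!\int_{t}^{T}\!e^{\beta s}|y(s)-y'(s)|^{2}ds,
\end{equation*}
valid provided $\beta$ is chosen large enough to absorb the $|y-y'|^{2}$ terms arising from the Lipschitz estimates on $f$ and $g$; the condition $\alpha<\frac12$ is precisely what makes $1-2\alpha>0$ so a small $\varepsilon$ can be chosen. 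Integrating over $t\in[0,T]$ and picking $\beta$ large enough yields $\|\Theta y-\Theta y'\|_{\beta}^{2}\le\kappa\|y-y'\|_{\beta}^{2}$ with $\kappa<1$, so Banach's fixed point theorem provides a unique fixed point $Y(\cdot)$ and the associated $Z(\cdot,\cdot)$ is the unique solution.

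For the a priori estimate \eqref{21}, the same It\^o computation applied to $e^{\beta r}|Y^{t}(r)|^{2}$ (comparing against the zero solution with data $(\psi(t),f(t,\cdot,0,0),g(t,\cdot,0,0))$) gives
\begin{equation*}
E e^{\beta t}|Y(t)|^{2}+\tfrac12 E\!\int_{t}^{T}\!e^{\beta s}\|Z(t,s)\|^{2}ds\le K\Bigl(E e^{\beta t}|\psi(t)|^{2}+E\!\int_{t}^{T}\!e^{\beta s}(|f(t,s,0,0)|^{2}+\|g(t,s,0,0)\|^{2})ds\Bigr),
\end{equation*}
after which integrating in $t$ and adjusting $\beta$ to absorb the $|Y(s)|^{2}$ terms generated by the Lipschitz estimates yields \eqref{21}. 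I expect the principal technical obstacle to be handling the $d\overleftarrow{B}_{s}$-integral in the energy identity correctly: the backward It\^o isometry contributes a $+\alpha\|Z-Z'\|^{2}$ term on the \emph{same} side as the $\|Z-Z'\|^{2}$ coming from the forward integral, which is why the hypothesis demands $\alpha<\frac12$ rather than merely $\alpha<1$ as in the pure BDSDE case; getting this sign bookkeeping right, together with the parameterized measurability of $(\eta^{t},\zeta^{t})$ in the extra variable $t$, is the delicate part.
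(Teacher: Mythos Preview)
Your route differs from the paper's in two respects. First, the paper does not run a Picard iteration with an externally frozen $y(\cdot)$: for each $t$ it solves the parameterized BDSDE \eqref{5} on $[t,T]$ with the BDSDE's \emph{own} state $\lambda(s)$ appearing in the drivers (Proposition~\ref{3} applies since $\alpha<\tfrac12<1$), and sets $Y(s)=\lambda(s)$, $Z(t,s)=\mu(t,s)$ directly. Second, and more importantly, the estimate is not obtained via It\^o's formula but by the El~Karoui--Huang technique: one squares the integral relation \eqref{57}, applies the weighted Cauchy--Schwarz inequality \eqref{6} to obtain $\big|\int_s^T\tilde f\,du\big|^2\le\frac{1}{\gamma}e^{-\gamma s}\int_s^T e^{\gamma u}|\tilde f|^2\,du$, and then integrates by parts in $s$ (inequalities \eqref{7}--\eqref{14}) to reach the key bound \eqref{0}, in which the $\tilde f$-integral already carries the prefactor $\tfrac{10}{\beta}$ and \emph{two} separate $\tilde g$-integrals appear (one weighted by $e^{\beta t}$, one by $e^{\beta s}$). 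Inserting the Lipschitz bounds gives coefficient $\tfrac{10c}{\beta}+2\alpha$ on the solution norm, so $\alpha<\tfrac12$ permits the choice $\beta=\tfrac{10c}{1-2\alpha}+1$.

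Your It\^o-formula sketch, as written, has a gap. In your scheme the BDSDE driver depends on the frozen $y(s)$ and on $\zeta(s)$ but not on the BDSDE state $\eta(s)$, so the energy identity (with Young's inequality, parameter $\beta$) yields
\[
e^{\beta t}E|\delta\eta(t)|^2+\Big(1-\tfrac{c}{\beta}-\alpha\Big)E\!\int_t^T e^{\beta s}\|\delta\zeta\|^2\,ds\le\Big(\tfrac{c}{\beta}+\alpha\Big)E\!\int_t^T e^{\beta s}|\delta y(s)|^2\,ds.
\]
Note the $Z$-coefficient is $1-\alpha-O(1/\beta)$, not $1-2\alpha$; the paper's factor $2$ arises from the two $\tilde g$-integrals in \eqref{0}, not from the backward isometry. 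More seriously, after integrating in $t$ and applying Fubini the right-hand side becomes $(\tfrac{c}{\beta}+\alpha)\int_0^T s\,e^{\beta s}E|\delta y(s)|^2\,ds\le T(\tfrac{c}{\beta}+\alpha)\|\delta y\|_\beta^2$, so your contraction constant tends to $\alpha T$ as $\beta\to\infty$. Taking $\beta$ large shrinks only the $\tfrac{c}{\beta}$ piece coming from $f$, not the $\alpha$ piece coming from $g$; hence the hypothesis $\alpha<\tfrac12$ alone does not force a contraction when $T>2$. The paper's construction sidesteps this because the parameterized BDSDE is solved in one shot (with its own state in the driver) for each $t$, so no external fixed-point map on $y(\cdot)$---and hence no such $T$-factor in a contraction constant---is ever needed.
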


\begin{proof}
The method used in the proof, similar in \cite{Wang},
 is inspired by the method of estimating the adapted solutions of BSDEs in \cite{El}.
We introduce the following family of BDSDEs (parameterized by $t\in [0,T]$), for $r\in [t,T],$
\begin{equation}\label{5}
   \lambda(r)=\psi(t) + \int_r^T f(t,s,\lambda(s),\mu(t,s)) ds
    + \int_r^T g(t,s,\lambda(s),\mu(t,s)) d\overleftarrow{B}_{s} - \int_r^T \mu(t,s) dW_{s}.
\end{equation}
By Proposition \ref{3}, Eq. (\ref{5}) admits a unique solution
$(\lambda(\cdot),\mu(t,\cdot))$ on $[t,T]$,  $\forall t\in [0,T]$.
Let
\begin{equation*}
Y(s)=\lambda(s), \qq Z(t,s)=\mu(t,s), \qq 0\leq t\leq s\leq T.
\end{equation*}
Then from (\ref{5}), for $r\in [t,T]$,
\bel{42}Y(r)+\int_r^T Z(t,s) dW_{s} =\psi(t) + \int_r^T f(t,s,Y(s),Z(t,s)) ds
    + \int_r^T g(t,s,Y(s),Z(t,s)) d\overleftarrow{B}_{s}. \ee
Especially when $r=t$, we obtain that
\begin{equation}\label{57}
Y(t)+\int_t^T Z(t,s) dW_{s} =\psi(t) + \int_t^T f(t,s,Y(s),Z(t,s)) ds
    + \int_t^T g(t,s,Y(s),Z(t,s)) d\overleftarrow{B}_{s}.
\end{equation}
Hence $(Y(\cdot),Z(\cdot,\cdot))$ is a solution of (\ref{2}).
Now we estimate
\begin{equation*}
   E\int_0^T e^{\beta t} |Y(t)|^{2} dt + E\int_0^T \int_t^T e^{\beta s} \|Z(t,s)\|^{2} dsdt.
\end{equation*}
In the following, for notational simplicity, we denote
$$\widetilde{f}(t,s)=f(t,s,Y(s),Z(t,s)), \qq \widetilde{g}(t,s)=g(t,s,Y(s),Z(t,s)).$$
By Cauchy-Schwarz inequality, it follows that
\bel{6}\ba{ll}
\ds \bigg|\int_s^T \widetilde{f}(t,u) du\bigg|^{2}\\
\ns\ds= \bigg|\int_s^T e^{\frac{-\gamma u}{2}} e^{\frac{\gamma u}{2}} \widetilde{f}(t,u) du\bigg|^{2}  \nonumber\\
\ns\ds\leq \int_s^T e^{-\gamma u}  du \cdot \int_s^T e^{\gamma u} |\widetilde{f}(t,u)|^{2} du  \nonumber\\
\ns\ds\leq \frac{1}{\gamma} e^{-\gamma s} \int_s^T e^{\gamma u} |\widetilde{f}(t,u)|^{2} du, \qq 0\leq s\leq t\leq T,  \ea\ee
where $\gamma=\frac{\beta}{2}$ or $\beta$. By taking $\gamma=\frac{\beta}{2}$ in (\ref{6}), we see that
$$\ba{ll}
\ds  \int_t^T \beta e^{\beta s}  \bigg|\int_s^T \widetilde{f}(t,u) du\bigg|^{2} ds\\
\ns\ds\leq \frac{4}{\beta} \int_t^T \frac{\beta}{2} e^{\frac{\beta}{2} s}
        \cdot\bigg(\int_s^T e^{\frac{\beta}{2} u} |\widetilde{f}(t,u)|^{2} du\bigg) ds\\
\ns\ds\leq \bigg(\frac{4}{\beta}e^{\frac{\beta}{2} s}
      \int_s^T e^{\frac{\beta}{2} u} |\widetilde{f}(t,u)|^{2} du\bigg) \bigg|_{t}^{T}
        +\frac{4}{\beta} \int_t^T e^{\beta s} |\widetilde{f}(t,s)|^{2} ds\\
\ns\ds\leq \frac{4}{\beta} \int_t^T e^{\beta s} |\widetilde{f}(t,s)|^{2} ds.
\ea$$
Therefore
\begin{equation}\label{7}
  E\int_0^T \int_t^T \beta e^{\beta s} \bigg|\int_s^T \widetilde{f}(t,u) du\bigg|^{2} dsdt
  \leq \frac{4}{\beta}  E\int_0^T \int_t^T e^{\beta s} |\widetilde{f}(t,s)|^{2} dsdt.
\end{equation}
We also obtain the following result by taking $s=t$ and $\gamma=\beta$ in (\ref{6}),
\begin{equation}\label{8}
  E\int_0^T e^{\beta t} \bigg|\int_t^T \widetilde{f}(t,s) ds\bigg|^{2} dt
  \leq \frac{1}{\beta}  E\int_0^T \int_t^T e^{\beta s} |\widetilde{f}(t,s)|^{2} dsdt.
\end{equation}
On the other hand, since
$$\ba{ll}
\ds   E\int_t^T \beta e^{\beta s}\bigg(\int_s^T \|\widetilde{g}(t,u)\|^{2} du\bigg) ds\\
\ns\ds = E\bigg( e^{\beta s} \int_s^T \|\widetilde{g}(t,u)\|^{2}du \bigg) \bigg|_{t}^{T} + E \int_t^T e^{\beta s} \|\widetilde{g}(t,s)\|^{2} ds\\
\ns\ds \leq E \int_t^T e^{\beta s} \|\widetilde{g}(t,s)\|^{2} ds,
\ea$$
we obtain
\begin{equation}\label{10}
  E\int_0^T \int_t^T \beta e^{\beta s}\bigg(\int_s^T \|\widetilde{g}(t,u)\|^{2} du\bigg) dsdt
  \leq  E\int_0^T \int_t^T e^{\beta s} \|\widetilde{g}(t,s)\|^{2} dsdt.
\end{equation}
Similarly, it's easy to see that
\begin{equation}\label{11}
    \int_r^T \beta e^{\beta s}\bigg(\int_s^T \|Z(t,u)\|^{2} du\bigg) ds
   =\bigg(e^{\beta s}\int_s^T \|Z(t,u)\|^{2} du\bigg)\bigg|_{r}^{T} + \int_r^T  e^{\beta s} \|Z(t,s)\|^{2} ds, \qq r\in [t,T].
\end{equation}
For every $ t\in [0,T]$, we can rewrite (\ref{11}) after taking $r=t$,
\bel{12}\ba{ll}
\ds E\int_0^T \int_t^T  e^{\beta s}\|Z(t,s)\|^{2} dsdt\\
\ns\ds = E\int_0^T\int_t^T \beta e^{\beta s}\bigg(\int_s^T \|Z(t,u)\|^{2} du\bigg) dsdt + E\int_0^T e^{\beta t}\int_t^T \|Z(t,u)\|^{2} du dt.
\ea\ee
Notice $\psi(t)$ is $\mathcal{F}_{T}$-measurable, then by using the property of conditional expectation,
it follows from (\ref{57}) that
$$\ba{ll}
\ds  E|Y(t)|^{2}+ E\int_t^T \|Z(t,s)\|^{2} ds\\
\ns\ds = E\bigg(\psi(t) + \int_t^T \widetilde{f}(t,s) ds + \int_t^T \widetilde{g}(t,s) d\overleftarrow{B}_{s}\bigg)^{2}\\
\ns\ds \leq E\bigg(2|\psi(t)|^{2} + 2\bigg|\int_t^T \widetilde{f}(t,s) ds\bigg|^{2} + \int_t^T \|\widetilde{g}(t,s)\|^{2} ds\bigg).
\ea$$
Therefore by (\ref{8}),
\bel{15}\ba{ll}
\ds  E\int_0^T e^{\beta t}|Y(t)|^{2}dt + E\int_0^T e^{\beta t}\int_t^T \|Z(t,s)\|^{2} dsdtr\\
\ns\ds\leq E \int_0^T\bigg(2 e^{\beta t}|\psi(t)|^{2} + \frac{2}{\beta}\int_t^T e^{\beta s} |\widetilde{f}(t,s)|^{2} ds+ e^{\beta t} \int_t^T \|\widetilde{g}(t,s)\|^{2} ds\bigg)dt.
\ea\ee
Similarly, from (\ref{42}), we obtain
\begin{equation*}
     E|Y(s)|^{2}+ E\int_s^T \|Z(t,u)\|^{2} du
\leq E\bigg(2|\psi(t)|^{2} + 2\bigg|\int_s^T \widetilde{f}(t,u) du\bigg|^{2} + \int_s^T \|\widetilde{g}(t,u)\|^{2} du\bigg).
\end{equation*}
Then, from (\ref{7}) and (\ref{10}),
\bel{14}\ba{ll}
\ds  E\int_0^T\int_t^T \beta e^{\beta s}\bigg(\int_s^T \|Z(t,u)\|^{2} du\bigg) dsdt\\
\ns\ds \leq E\int_0^T\bigg( 2e^{\beta t}|\psi(t)|^{2} + \frac{8}{\beta}\int_t^T e^{\beta s}|\widetilde{f}(t,s)|^{2} ds + \int_t^T e^{\beta s}\|\widetilde{g}(t,s)\|^{2} ds \bigg)dt.
\ea\ee
Hence by combining (\ref{12}), (\ref{15}) and (\ref{14}), it follows that
\bel{0}\ba{ll}
\ds  E\int_0^T e^{\beta t} |Y(t)|^{2} dt  +  E\int_0^T\int_t^T e^{\beta s} \|Z(t,s)\|^{2} dsdt\\
\ns\ds \leq  4E \int_0^Te^{\beta t}|\psi(t)|^{2} dt
     + \frac{10}{\beta}  E\int_0^T \int_t^T e^{\beta s} |\widetilde{f}(t,s)|^{2} dsdt \\
\ns\ds+ E\int_0^T e^{\beta t} \int_t^T \|\widetilde{g}(t,s)\|^{2} ds dt
     + E\int_0^T \int_t^T e^{\beta s} \|\widetilde{g}(t,s)\|^{2} dsdt.
\ea\ee
Or
$$\ba{ll}
\ds  E\int_0^T e^{\beta t} |Y(t)|^{2} dt  +  E\int_0^T\int_t^T e^{\beta s} \|Z(t,s)\|^{2} dsdt \\
\ns\ds \leq  4E \int_0^Te^{\beta t}|\psi(t)|^{2} dt
     + \frac{10}{\beta}  E\int_0^T \int_t^T e^{\beta s} |f(t,s,Y(s),Z(t,s))|^{2} dsdt \\
\ns\qq \ \ \ + 2E\int_0^T \int_t^T e^{\beta s} \|g(t,s,Y(s),Z(t,s))\|^{2} dsdt\\
\ns\ds\leq  4E \int_0^Te^{\beta t}|\psi(t)|^{2} dt
     + \frac{10}{\beta}  E\int_0^T \int_t^T e^{\beta s} |f(t,s,0,0)|^{2} dsdt
     + 2E\int_0^T \int_t^T e^{\beta s} \|g(t,s,0,0)\|^{2} dsdt\\
\ns\qq \ \ \ + (\frac{10c}{\beta}+2\alpha)
    E\bigg[\int_0^T e^{\beta t} |Y(t)|^{2} dt  +  E\int_0^T\int_t^T e^{\beta s} \|Z(t,s)\|^{2} dsdt \bigg].
\ea$$
Hence
$$\ba{ll}
\ds  \big(1-\frac{10c}{\beta}-2\alpha\big)E\bigg[\int_0^T e^{\beta t} |Y(t)|^{2} dt
+  E\int_0^T\int_t^T e^{\beta s} \|Z(t,s)\|^{2} dsdt \bigg] \\
\ns\ds \leq 4E \int_0^Te^{\beta t}|\psi(t)|^{2} dt
+ \frac{10}{\beta}  E\int_0^T \int_t^T e^{\beta s} |f(t,s,0,0)|^{2} dsdt
+ 2E\int_0^T \int_t^T e^{\beta s} \|g(t,s,0,0)\|^{2} dsdt.
\ea$$
Now by letting $\beta=\frac{10c}{1-2\alpha}+1$, then the estimate (\ref{21}) holds, which implies the uniqueness of BDSVIE (\ref{2}).
\end{proof}

From the above theorem, a corollary follows directly.

\begin{corollary}\label{9}
Let $\psi\in L_{\mathcal{F}_{T}}^2(0,T;\mathbb{R}^{k})$,
 $f\in L_{\mathcal{F}}^2(\Delta;\mathbb{R}^{k})$ and $g\in L_{\mathcal{F}}^2(\Delta;\mathbb{R}^{k\times d})$.
Then  BDSVIE:
\begin{equation*}
   Y(t)=\psi(t) + \int_t^T f(t,s) ds + \int_t^T g(t,s) d\overleftarrow{B}_{s} - \int_t^T Z(t,s) dW_{s}, \qq t\in [0,T],
\end{equation*}
has unique solution $(Y(\cdot),Z(\cdot,\cdot))$ $\in \mathcal{H}_{\Delta}^{2}[0,T]$, and the following estimate holds,
\bel{20}\ba{ll}
\ds E\int_0^T e^{\beta t} |Y(t)|^{2} dt  +  E\int_0^T\int_t^T  e^{\beta s} \|Z(t,s)\|^{2} dsdt \\
\ns\ds\leq 4E \int_0^T e^{\beta t}|\psi(t)|^{2} dt + \frac{10}{\beta}  E\int_0^T \int_t^T e^{\beta s} |f(t,s)|^{2} dsdt \\
\ns\ds + E\int_0^T e^{\beta t} \int_t^T \|g(t,s)\|^{2} ds dt +E\int_0^T \int_t^T e^{\beta s} \|g(t,s)\|^{2} dsdt. \ea\ee
\end{corollary}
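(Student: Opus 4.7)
The plan is to derive this corollary directly from Theorem \ref{19} by treating $f(t,s)$ and $g(t,s)$ as degenerate coefficients that happen to be independent of $(y,z)$. Define
\[
\wt f(t,s,y,z)\deq f(t,s),\qquad \wt g(t,s,y,z)\deq g(t,s),
\]
so that these extended maps trivially satisfy assumption (H2) with Lipschitz constants $c=0$ and $\a=0$, and still lie in the required $L^{2}_{\cF}(\D;\cdot)$ spaces by hypothesis. Theorem \ref{19} then immediately produces a unique solution $(Y(\cdot),Z(\cdot,\cdot))\in\cH^{2}_{\D}[0,T]$ to the stated equation.

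For the quantitative estimate \rf{20}, I would revisit estimate \rf{0} inside the proof of Theorem \ref{19}. That inequality was obtained purely from Cauchy--Schwarz, the It\^{o} isometry, and integration by parts applied to the $e^{\b s}$ weights, without yet invoking Lipschitz continuity on $f$ or $g$. In the notation of that proof one has $\wt f(t,s)=f(t,s,Y(s),Z(t,s))$ and $\wt g(t,s)=g(t,s,Y(s),Z(t,s))$, and \rf{0} reads
\[
\ba{ll}
\ds E\3n\int_0^T \3n e^{\b t}|Y(t)|^{2} dt + E\3n\int_0^T\3n\int_t^T \3n e^{\b s}\|Z(t,s)\|^{2}dsdt
\les 4E\3n\int_0^T\3n e^{\b t}|\psi(t)|^{2}dt + \frac{10}{\b}E\3n\int_0^T\3n\int_t^T\3n e^{\b s}|\wt f(t,s)|^{2}dsdt\\
\ns\ds\qq +\, E\3n\int_0^T\3n e^{\b t}\int_t^T\3n \|\wt g(t,s)\|^{2}dsdt + E\3n\int_0^T\3n\int_t^T \3n e^{\b s}\|\wt g(t,s)\|^{2}dsdt.
\ea
\]
Under the present hypothesis $\wt f(t,s)=f(t,s)$ and $\wt g(t,s)=g(t,s)$ are $(Y,Z)$-independent, so the right-hand side is already the bound claimed in \rf{20}, valid for every $\b>0$. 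No Lipschitz absorption step is needed here; in particular the restriction $\b=\frac{10c}{1-2\a}+1$ used at the end of the proof of Theorem \ref{19} is irrelevant to this corollary.

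There is essentially no main obstacle: the work has already been carried out inside the proof of Theorem \ref{19}, and what the corollary asks is that one stops reading that proof at the estimate \rf{0} and records the conclusion in the special case where the driving coefficients do not depend on the unknowns. One small bookkeeping point is to verify that the two constants in front of the $\|g\|^{2}$ terms in \rf{20} genuinely come from two separate sources in \rf{0}, namely the $E|Y(t)|^{2}+E\int_t^T\|Z(t,s)\|^{2}ds$ estimate at $r=t$ in \rf{15} and the $\int_s^T\|Z(t,u)\|^{2}du$ tail estimate \rf{14}; this is a routine re-reading rather than a genuine difficulty.
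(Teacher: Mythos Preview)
Your proposal is correct and matches the paper's own reasoning: the paper states that estimate \rf{20} follows directly from \rf{0}, which is precisely the observation you make that one can stop the proof of Theorem \ref{19} at \rf{0} when the coefficients do not depend on $(y,z)$. The only cosmetic quibble is that (H2) literally demands $c>0$ and $0<\a<\tfrac12$, so instead of writing $c=0,\a=0$ you should note that the trivial Lipschitz bound holds for any admissible positive constants.
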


\begin{remark}
The estimate (\ref{20}) follows from (\ref{0}).
A detailed proof of the above corollary is presented in Shi and Wen \cite{Wen}.
\end{remark}

\begin{remark}\label{98}
Under the assumptions of Theorem \ref{19}, if $Z(\cdot,\cdot)$ on $\Delta^{c}$  is defined as follows,
\begin{equation*}
  Z(t,s)= X_{1}(t,s) + X_{2}(t,s), \qq \forall (t,s)\in \Delta^{c},
\end{equation*}
where $X_{1} \in L^{2}_{\mathcal{F^{W}}}([0,T]^{2};\mathbb{R}^{k\times d})$, $X_{2} \in L^{2}_{\mathcal{F^{B}}}([0,T]^{2};\mathbb{R}^{k\times d})$
are determined by:
$$\left\{\ba{ll}
\ds E[Y(t)\big| \mathcal{F}^{W}_{t}]=EY(t)+\int\limits_0^t X_{1}(t,s) d W_{s}, \qq (t,s)\in \Delta^{c}; \\
\ns\ds E[Y(t)\big| \mathcal{F}^{B}_{t,T}]=EY(t)+\int\limits_t^T X_{2}(t,s) d \overleftarrow{B}_{s}, \qq (t,s)\in \Delta,
\ea\right.$$
and
\begin{equation*}
 \begin{cases}
  X_{1}(t,s)= X_{1}(s,t), \qq (t,s)\in \Delta; \\
  X_{2}(t,s)= X_{2}(s,t), \qq (t,s)\in \Delta^{c}.\\
  \end{cases}
\end{equation*}
Then BDSVIE  (\ref{2}) admits a unique SM-solution in $\mathcal{H}^{2}[0,T]$.
\end{remark}

Now we prove the existence and uniqueness of SM-solutin of BDSVIE (\ref{55}).

\begin{itemize}
  \item [$\mathbf{(H3)}$]
  Assume
\begin{equation*}
f:\Omega\times \Delta\times \mathbb{R}^{k}\times \mathbb{R}^{k\times d}\times \mathbb{R}^{k\times d}\rightarrow \mathbb{R}^{k}; \ \
g:\Omega\times \Delta\times \mathbb{R}^{k}\times \mathbb{R}^{k\times d}\times \mathbb{R}^{k\times d}\rightarrow \mathbb{R}^{k\times d},
\end{equation*}
be jointly measurable such that for all $(y,z,\zeta)\in \mathbb{R}^{k}\times \mathbb{R}^{k\times d}\times \mathbb{R}^{k\times d}$,
\begin{equation*}
f(\cdot,\cdot,y,z,\zeta)\in L_{\mathcal{F}}^{2}(\Delta; \mathbb{R}^{k}), \ \
g(\cdot,\cdot,y,z,\zeta)\in L_{\mathcal{F}}^{2}(\Delta; \mathbb{R}^{k\times d}).
\end{equation*}
Furthermore, there exist constants $c>0$ and $0<\alpha<\frac{1}{T+8}$ such that for any  $y,y'\in\mathbb{R}^{k},z,z',\zeta,\zeta' \in {R}^{k\times d}$ and
$(t,s) \in \Delta$,
\begin{equation*}
 \begin{cases}
   |f(t,s,y,z,\zeta)-f(t,s,y',z',\zeta')|^{2}\leq c(|y-y'|^{2} + \|z-z'\|^{2} + \|\zeta-\zeta'\|^{2});\\
   \|g(t,s,y,z,\zeta)-g(t,s,y',z',\zeta')\|^{2}\leq \alpha(|y-y'|^{2} + \|z-z'\|^{2} + \|\zeta-\zeta'\|^{2}).\\
 \end{cases}
\end{equation*}
\end{itemize}

For notational simplicity, in the following we denote $f_{0}(t,s)=f(t,s,0,0,0)$ and $g_{0}(t,s)=g(t,s,0,0,0)$.
Now we establish the main result.

\begin{theorem}\label{22}
  Under the assumption (H3), for any $\psi(\cdot)\in L_{\mathcal{F}_{T}}^2(0,T;\mathbb{R}^{k})$,
  BDSVIE (\ref{55}) admits unique SM-solution $(Y(\cdot),Z(\cdot,\cdot))\in \mathcal{H}^{2}[0,T]$.
  Moreover, the following estimate holds,
\bel{18}\ba{ll}
\ds E\bigg[\int_0^T e^{\beta t} |Y(t)|^{2} dt + \int_0^T \int_t^T e^{\beta s}  \|Z(t,s)\|^{2} dsdt\bigg]\\
\ns\ds \leq KE\bigg[\int_0^T |\psi(t)|^{2} dt +  \int_0^T \int_t^T e^{\beta s}|f_{0}(t,s)|^{2} dsdt
+\int_0^T \int_t^T e^{\beta s}\|g_{0}(t,s)\|^{2} dsdt\bigg].
\ea\ee
\end{theorem}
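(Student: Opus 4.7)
The plan is to set up a Picard contraction on the Hilbert space $\mathcal{M}^{2}[0,T]$ equipped with the equivalent norm $\|\cdot\|_{\mathcal{M}^{2}[0,T]}$, with the parameter $\beta$ to be chosen at the end. First, I will define a self-map $\Theta$ of $\mathcal{M}^{2}[0,T]$ by freezing the last coefficient argument: given $(y,z)\in\mathcal{M}^{2}[0,T]$ (so $z$ is defined on all of $[0,T]^{2}$ via the SM-extension), set
\[
\bar f(t,s,Y,Z) := f\bigl(t,s,Y,Z,z(s,t)\bigr), \qquad \bar g(t,s,Y,Z) := g\bigl(t,s,Y,Z,z(s,t)\bigr).
\]
Because (H3) implies (H2) for $(\bar f,\bar g)$ (noting $\alpha<1/(T+8)<1/2$), Theorem \ref{19} produces a unique $(Y,Z)\in\mathcal{H}^{2}_{\Delta}[0,T]$ solving the associated BDSVIE of the form \eqref{2} with coefficients $(\bar f,\bar g)$. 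I will then extend $Z$ to $\Delta^{c}$ via the SM-recipe \eqref{51}--\eqref{54} of Remark \ref{98} to obtain a pair in $\mathcal{M}^{2}[0,T]$, and set $\Theta(y,z):=(Y,Z)$. A fixed point of $\Theta$ is by construction an SM-solution of \eqref{55}, and uniqueness of the fixed point gives uniqueness of SM-solutions.

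The core of the argument is to show $\Theta$ is a contraction. For two inputs $(y_{i},z_{i})$ with outputs $(Y_{i},Z_{i})$, write $\Delta y, \Delta z, \Delta Y, \Delta Z$ for the differences. The pair $(\Delta Y,\Delta Z)$ solves a BDSVIE of the type in Theorem \ref{19}, with $\psi\equiv 0$ and external forcings $F_{0}(t,s) := f(t,s,Y_{2},Z_{2},z_{1}(s,t))-f(t,s,Y_{2},Z_{2},z_{2}(s,t))$ and $G_{0}(t,s) := g(t,s,Y_{2},Z_{2},z_{1}(s,t))-g(t,s,Y_{2},Z_{2},z_{2}(s,t))$, which by (H3) satisfy $|F_{0}|^{2}\le c\|\Delta z(s,t)\|^{2}$ and $\|G_{0}\|^{2}\le\alpha\|\Delta z(s,t)\|^{2}$. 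Plugging these bounds into the a priori estimate \eqref{0} (with $\psi\equiv 0$) bounds $\|(\Delta Y,\Delta Z)\|^{2}$ by $E\int_{0}^{T}\!\int_{t}^{T}e^{\beta s}\|\Delta z(s,t)\|^{2}\,ds\,dt$ times an explicit constant in $\beta,c,\alpha,T$. Linearity of the SM-extension gives $(\Delta y,\Delta z)\in\mathcal{M}^{2}[0,T]$, and after interchanging the roles of $t$ and $s$ the integrand lives on $\Delta^{c}$, so the key inequality \eqref{99} applied to $(\Delta y,\Delta z)$ yields
\[
E\int_{0}^{T}\!\int_{t}^{T}e^{\beta s}\|\Delta z(s,t)\|^{2}\,ds\,dt \;\le\; 4\,E\int_{0}^{T}e^{\beta u}|\Delta y(u)|^{2}\,du \;\le\; 4\|(\Delta y,\Delta z)\|^{2}_{\mathcal{M}^{2}[0,T]}.
\]
Combining these two bounds produces $\|(\Delta Y,\Delta Z)\|^{2}\le\Lambda(\beta,c,\alpha,T)\,\|(\Delta y,\Delta z)\|^{2}$.

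The concluding step is to choose $\beta$ large enough that $\Lambda<1$; Banach's fixed-point theorem then delivers a unique SM-solution, and the estimate \eqref{18} follows by applying \eqref{21} to the fixed point and handling the $\zeta$-Lipschitz contribution via \eqref{99} one more time. The main obstacle I anticipate is the constant-tracking required to make $\Lambda<1$ precisely under the threshold $\alpha<1/(T+8)$ of (H3): the factor $T$ enters through the identity $\int_{0}^{T}\!\int_{t}^{T}e^{\beta s}|Y(s)|^{2}\,ds\,dt = \int_{0}^{T} s\,e^{\beta s}|Y(s)|^{2}\,ds \le T\int_{0}^{T}e^{\beta s}|Y(s)|^{2}\,ds$ when the Lipschitz-in-$Y$ contributions from $\tilde g$ are absorbed, and it couples with the factor $4$ coming from \eqref{99} and the constants $10/\beta$ and $2$ appearing in \eqref{0} to fix the numerical threshold on $\alpha$. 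Bridging the $\Delta$-based estimate \eqref{0} with the $\Delta^{c}$-based inequality \eqref{99} via the $(t,s)\leftrightarrow(s,t)$ swap, while keeping the contraction factor strictly below $1$, is the delicate part.
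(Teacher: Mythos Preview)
Your overall scheme---a Picard contraction on $\mathcal{M}^{2}[0,T]$ using the SM-extension and the key inequality \eqref{99}---matches the paper's. The difference is in how you set up the map $\Theta$. You freeze only the last argument $z(s,t)$ and invoke Theorem~\ref{19} for the inner solve, so the $(Y,Z(t,s))$-Lipschitz contributions are absorbed inside that theorem and only the $\zeta$-Lipschitz drives your contraction. The paper instead freezes \emph{all three} arguments $(y(s),z(t,s),z(s,t))$ and uses the simpler Corollary~\ref{9}; every Lipschitz term then appears explicitly in the contraction estimate, with the $y$-term picking up the factor $T$ and the $z(s,t)$-term routed through \eqref{99}, so that the coefficients in front of $\|\Delta y\|^{2}$ and $\|\Delta z\|^{2}_{\Delta}$ are read off directly as $\frac{K}{\beta}+\alpha(T+8)$ and $\frac{K}{\beta}+2\alpha$.

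This organizational choice affects the threshold you actually obtain. In your setup, after applying \eqref{0} to $(\Delta Y,\Delta Z)$ and moving the $(\Delta Y,\Delta Z)$-terms to the left, the large-$\beta$ inequality reads
\[
(1-\alpha T)\|\Delta Y\|^{2}+(1-2\alpha)\|\Delta Z\|^{2}_{\Delta}\;\le\;8\alpha\,\|\Delta y\|^{2},
\]
and a contraction in the standard $\mathcal{M}^{2}$-norm requires $8\alpha<\min(1-\alpha T,\,1-2\alpha)$. For $T\ge 2$ this is exactly $\alpha(T+8)<1$, i.e.\ (H3); but for $T<2$ the minimum is $1-2\alpha$ and you are forced to $\alpha<1/10$, strictly stronger than (H3)'s $\alpha<1/(T+8)$. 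So as written your argument does not cover the full range of (H3) for small $T$. The paper's decomposition avoids this because both coefficients sit in front of the \emph{input} $(\Delta y,\Delta z)$, and the binding one is $\alpha(T+8)$ directly. You can repair your version either by freezing all three arguments as the paper does, or by running the contraction in a weighted norm $\|\Delta Y\|^{2}+\varepsilon\|\Delta Z\|^{2}_{\Delta}$ with $\varepsilon$ small (since only $\|\Delta y\|^{2}$ appears on the right, this recovers the condition $8\alpha<1-\alpha T$, i.e.\ (H3)).
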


\begin{proof}
For any $(y(\cdot),z(\cdot,\cdot))\in \mathcal{M}^{2}[0,T]$, consider the following BDSVIE,
\begin{equation}\label{23}
\begin{split}
   Y(t)=&\psi(t) + \int_t^T f(t,s,y(s),z(t,s),z(s,t)) ds \\
        & + \int_t^T g(t,s,y(s),z(t,s),z(s,t)) d\overleftarrow{B}_{s} - \int_t^T Z(t,s) dW_{s}, \qq t\in[0,T].
\end{split}
\end{equation}
From Corollary \ref{9}, Eq. (\ref{23}) exists unique solution $(Y(\cdot),Z(\cdot,\cdot))\in \mathcal{H}_{\Delta}^{2}[0,T]$.
Now we define $Z(\cdot,\cdot)$ on $\Delta^{c}$ as follows:
\begin{equation*}
  Z(t,s)= X_{1}(t,s) + X_{2}(t,s), \qq \forall (t,s)\in \Delta^{c},
\end{equation*}
where $X_{1} \in L^{2}_{\mathcal{F^{W}}}([0,T]^{2};\mathbb{R}^{k\times d})$ and $X_{2} \in L^{2}_{\mathcal{F^{B}}}([0,T]^{2};\mathbb{R}^{k\times d})$
are determined by:
$$\left\{\ba{ll}
\ds E[Y(t)\big| \mathcal{F}^{W}_{t}]=EY(t)+\int\limits_0^t X_{1}(t,s) d W_{s}, \qq (t,s)\in \Delta^{c}; \\
\ns\ds E[Y(t)\big| \mathcal{F}^{B}_{t,T}]=EY(t)+\int\limits_t^T X_{2}(t,s) d \overleftarrow{B}_{s}, \qq (t,s)\in \Delta,
\ea\right.$$
and
\begin{equation*}
 \begin{cases}
  X_{1}(t,s)= X_{1}(s,t), \qq (t,s)\in \Delta; \\
  X_{2}(t,s)= X_{2}(s,t), \qq (t,s)\in \Delta^{c}.\\
  \end{cases}
\end{equation*}
 Then $(Y(\cdot),Z(\cdot,\cdot))\in \mathcal{M}^{2}[0,T]$ is the unique SM-solution to BDSVIE (\ref{23}),
and we can define a mapping $\Theta:\mathcal{M}^{2}[0,T]\rightarrow \mathcal{M}^{2}[0,T]$ by
\begin{equation*}
\Theta(y(\cdot),z(\cdot,\cdot))=(Y(\cdot),Z(\cdot,\cdot)), \qq \forall (y(\cdot),z(\cdot,\cdot))\in \mathcal{M}^{2}[0,T].
\end{equation*}
 From the estimate (\ref{20}),
\bel{24}\ba{ll}
\ds E\int_0^T e^{\beta t} |Y(t)|^{2} dt + E\int_0^T \int_t^T e^{\beta s} |Z(t,s)|^{2} dsdt\\
\ns\ds \leq 4e^{\beta T}E\int_0^T |\psi(t)|^{2} dt
+ \frac{10}{\beta} E\int_0^T \int_t^T e^{\beta s}|f(t,s,y(s),z(t,s),z(s,t))|^{2} dsdt\\
\ns\ds + E\int_0^T e^{\beta t}\int_t^T|g(t,s,y(s),z(t,s),z(s,t))|^{2} dsdt \\
\ns\ds+ E\int_0^T \int_t^T e^{\beta s}|g(t,s,y(s),z(t,s),z(s,t))|^{2} dsdt.
\ea\ee
For the second term in the right part of (\ref{24}), from (H2) and notice (\ref{99}), one has
$$\ba{ll}
\ds E \int_0^T \int_t^T e^{\beta s} |f(t,s,y(s),z(t,s),z(s,t))|^{2} dsdt\\
\ns\ds \leq E \int_0^T \int_t^T e^{\beta s}\left(|f_{0}(t,s)|^{2} + c|y(s)|^{2} + c|z(t,s)|^{2} + c|z(s,t)|^{2}\right) dsdt\\
\ns\ds \leq c(T+4)E \int_0^T e^{\beta t}|y(t)|^{2} dt+ cE\int_0^T \int_t^T e^{\beta s}|z(t,s)|^{2} ds dt
 +E \int_0^T \int_t^T e^{\beta s}|f_{0}(t,s)|^{2} dsdt.
\ea$$
Similarly, for the third term in the right part of (\ref{24}),
$$\ba{ll}
\ds  E\int_0^T e^{\beta t}\int_t^T|g(t,s,y(s),z(t,s),z(s,t))|^{2} dsdt\\
\ns\ds \leq E \int_0^T e^{\beta t} \int_t^T\left(|g_{0}(t,s)|^{2} + \alpha|y(s)|^{2} + \alpha|z(t,s)|^{2} + \alpha|z(s,t)|^{2}\right) dsdt\\
\ns\ds \leq (\frac{\alpha}{\beta}+4\alpha) E \int_0^T e^{\beta t}|y(t)|^{2} dt
        + \alpha E\int_0^T \int_t^T e^{\beta s}|z(t,s)|^{2} ds dt
        + E \int_0^T \int_t^T e^{\beta s}|g_{0}(t,s)|^{2} dsdt.
\ea$$
Also, for the fourth term in the right part of (\ref{24}),
$$\ba{ll}
\ds  E \int_0^T \int_t^T e^{\beta s} |g(t,s,y(s),z(t,s),z(s,t))|^{2} dsdt\\
\ns\ds \leq E \int_0^T \int_t^T e^{\beta s}\left(|g_{0}(t,s)|^{2} + \alpha|y(s)|^{2} + \alpha|z(t,s)|^{2} + \alpha|z(s,t)|^{2}\right) dsdt\\
\ns\ds\leq \alpha(T+4)E \int_0^T e^{\beta t}|y(t)|^{2} dt
        + \alpha E\int_0^T \int_t^T e^{\beta s}|z(t,s)|^{2} ds dt
        +E \int_0^T \int_t^T e^{\beta s}|g_{0}(t,s)|^{2} dsdt.
\ea$$
Hence we deduce
\bel{25}\ba{ll}
\ds  E\int_0^T e^{\beta t} |Y(t)|^{2} dt + E\int_0^T \int_t^T e^{\beta s} |Z(t,s)|^{2} dsdt \\
\ns\ds \leq 4e^{\beta T}E\int_0^T |\psi(t)|^{2} dt
+ \frac{10}{\beta}E \int_0^T \int_t^T e^{\beta s}|f_{0}(t,s)|^{2} dsdt
 +2 E \int_0^T \int_t^T e^{\beta s}|g_{0}(t,s)|^{2} dsdt  \\
\ns\ds +  [\frac{K}{\beta}+\alpha(T+8) ]E \int_0^T e^{\beta t}|y(t)|^{2} dt
         + (\frac{K}{\beta}+2\alpha)E\int_0^T \int_t^T e^{\beta s}|z(t,s)|^{2} ds dt.
\ea\ee
Now, if $(Y_{i}(\cdot),Z_{i}(\cdot,\cdot))$ is the corresponding SM-solution of $(y_{i}(\cdot),z_{i}(\cdot,\cdot))$ to BDSVIE (\ref{23}), $i=1,2$, then
$$\ba{ll}
\ds E \bigg(\int_0^{T}e^{\beta t}|Y_{1}(t)-Y_{2}(t)|^{2} dt
   + \int_0^{T}\int_t^{T} e^{\beta s}|Z_{1}(t,s)-Z_{2}(t,s)|^{2} ds dt\bigg)\\
\ns\ds \leq\big[ \frac{K}{\beta}+\alpha(T+8) \big]
       E \bigg(\int_0^{T}e^{\beta t}|y_{1}(t)-y_{2}(t)|^{2} dt + \int_0^{T}\int_t^{T} e^{\beta s}|z_{1}(t,s)-z_{2}(t,s)|^{2} ds dt\bigg).
\ea$$
Let $\beta=\frac{2K}{1-\alpha(T+8)}$,
then the mapping $\Theta$ is contractive on $\mathcal{H}^{2}[0,T]$,
which implies BDSVIE (\ref{55}) admits a unique SM-solution.
And the estimate (\ref{18}) directly follows from  (\ref{25}).
This completes the proof.
\end{proof}

\section{Appendix}

To our best knowledge, since we haven't find a detail proof for the backward martingale representation theorem, in the following we present its proof in details. In fact, similar to the proof of classical martingale representation theorem, it is not difficult to prove the backward martingale representation theorem.

First, we present two lemmas which will be used in the following. The first lemma is the bakcward It\^{o} formula and the second lemma is a basic property of the space $L^2(\cF_{0,T}^B,\dbP)$.

\begin{lemma}[Pardoux-Peng \cite{Peng}, Lemma 1.3]\label{0.1}
Let $\a\in S_{\mathcal{F}^B}^2(0,T;\dbR)$, $\b,\g\in L_{\mathcal{F}^B}^2(0,T;\dbR)$ be such that,
$$\a_t=\a_0+\int_0^t\b_sds+\int_0^t\g_sd\overleftarrow{B}_s,\qq~0\leq~t\leq~T.$$
Then, if $\p\in C^2(\dbR)$, we have
$$\ds\p(\a_t)=\p(\a_0)+\int_0^t\p'(\a_s)\b_sds+\int_0^t\p'(\a_s)\g_sd\overleftarrow{B}_s-\int_0^t\p''(\a_s)\g^2_sds.$$
\end{lemma}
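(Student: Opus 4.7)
The plan is to adapt the standard proof of the forward Itô formula, carefully tracking the sign and coefficient changes introduced by the backward integral convention. The key feature of the backward setting is that the integral $\int_0^t \gamma_s d\overleftarrow{B}_s$ is defined by right-endpoint Riemann sums $\sum_i \gamma_{t_{i+1}}(B_{t_{i+1}} - B_{t_i})$, which is natural because $\gamma_s$ is $\mathcal{F}^B_{s,T}$-measurable and hence $\gamma_{t_{i+1}}$ is independent of the forward increment $B_{t_{i+1}} - B_{t_i}$.

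First, by a truncation/localization argument, I would reduce to the case where $\varphi, \varphi', \varphi''$ are bounded and $\beta, \gamma$ are bounded. Introduce the stopping time $\tau_n = \inf\{s : |\alpha_s| + \int_0^s(|\beta_r|^2 + |\gamma_r|^2)dr \geq n\} \wedge T$ and replace $\varphi$ by a $C^2$ function that equals $\varphi$ on $[-n,n]$ and is constant outside $[-n-1,n+1]$; then pass to the limit using the $S^2_{\mathcal{F}^B}$-continuity of $\alpha$ and the backward Itô isometry to control the stochastic integrals.

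Second, fix a partition $\pi:0=t_0<t_1<\cdots<t_N=t$ and write the telescoping sum
\[
\varphi(\alpha_t)-\varphi(\alpha_0)=\sum_{i=0}^{N-1}\bigl[\varphi(\alpha_{t_{i+1}})-\varphi(\alpha_{t_i})\bigr].
\]
Taylor expand each increment \emph{around the right endpoint} $\alpha_{t_{i+1}}$ (this is the sign-flipping step that distinguishes the backward case):
\[
\varphi(\alpha_{t_{i+1}})-\varphi(\alpha_{t_i})=\varphi'(\alpha_{t_{i+1}})(\alpha_{t_{i+1}}-\alpha_{t_i})-\tfrac{1}{2}\varphi''(\xi_i)(\alpha_{t_{i+1}}-\alpha_{t_i})^2,
\]
for some $\xi_i$ between $\alpha_{t_i}$ and $\alpha_{t_{i+1}}$. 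Then identify the limits of the three resulting sums as $|\pi|\to 0$: (i) $\sum\varphi'(\alpha_{t_{i+1}})\int_{t_i}^{t_{i+1}}\beta_s\,ds \to \int_0^t \varphi'(\alpha_s)\beta_s\,ds$ by continuity of $\alpha$ and dominated convergence; (ii) $\sum\varphi'(\alpha_{t_{i+1}})\int_{t_i}^{t_{i+1}}\gamma_s\, d\overleftarrow{B}_s \to \int_0^t \varphi'(\alpha_s)\gamma_s\, d\overleftarrow{B}_s$ in $L^2$, because the simple step process $\sum_i \varphi'(\alpha_{t_{i+1}})\gamma_s\mathbf{1}_{(t_i,t_{i+1}]}$ is backward-adapted and approximates $\varphi'(\alpha_\cdot)\gamma_\cdot$ in $L^2_{\mathcal{F}^B}(0,t)$, so the backward Itô isometry yields convergence; (iii) the quadratic sum converges to the stated correction integral.

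The main obstacle is step (iii). Using $\alpha_{t_{i+1}}-\alpha_{t_i}=\int_{t_i}^{t_{i+1}}\beta_s\,ds+\int_{t_i}^{t_{i+1}}\gamma_s\, d\overleftarrow{B}_s$, expand the square; the drift-squared piece is $O(|\pi|)$ pointwise and the drift$\times$martingale cross term vanishes in $L^1$ by Cauchy--Schwarz and the backward isometry. The main term
\[
\sum_i \varphi''(\alpha_{t_{i+1}})\Bigl(\int_{t_i}^{t_{i+1}}\gamma_s\, d\overleftarrow{B}_s\Bigr)^2
\]
must be shown to converge to the stated quadratic-variation integral involving $\gamma_s^2$. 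The strategy is to compare it with $\sum_i \varphi''(\alpha_{t_{i+1}})\int_{t_i}^{t_{i+1}}\gamma_s^2\,ds$, which clearly converges to $\int_0^t\varphi''(\alpha_s)\gamma_s^2\,ds$, and then show that the difference vanishes in $L^1$. The difference reduces to sums of the form $\sum_i\varphi''(\alpha_{t_{i+1}})\bigl[(\Delta_i^{\overleftarrow B})^2-\int_{t_i}^{t_{i+1}}\gamma_s^2 ds\bigr]$, whose $L^2$ norm is controlled by standard backward martingale orthogonality together with $\mathcal{F}^B_{t_{i+1},T}$-measurability of $\varphi''(\alpha_{t_{i+1}})$ and independence of increments of $B$ on disjoint intervals. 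Finally, replacing $\varphi''(\xi_i)$ by $\varphi''(\alpha_{t_{i+1}})$ is justified by uniform continuity of $\varphi''$ on the compact range of the bounded process $\alpha$ together with the $L^2$-bound $E\sum_i(\alpha_{t_{i+1}}-\alpha_{t_i})^2\leq C$. Combining the three limits and then removing the boundedness assumptions via the localization completes the proof.
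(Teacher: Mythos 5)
The paper does not prove this lemma at all: it is quoted (in simplified form) from Pardoux--Peng \cite{Peng}, Lemma 1.3, so your discretization argument is necessarily a different route. It is the standard Riemann-sum proof of the It\^{o} formula transposed to the backward integral, with the Taylor expansion anchored at the right endpoint so that the first-order sum converges to the backward integral and the second-order correction acquires a minus sign. As a strategy this is sound, and the orthogonality argument you invoke in step (iii) does go through: for $i<j$, conditioning on $\mathcal{F}^B_{t_{i+1},T}$ kills the cross terms, because $\varphi''(\alpha_{t_{i+1}})$ and all quantities attached to the later intervals are $\mathcal{F}^B_{t_{i+1},T}$-measurable, while
$E\big[\big(\int_{t_i}^{t_{i+1}}\gamma_s\,d\overleftarrow{B}_s\big)^2-\int_{t_i}^{t_{i+1}}\gamma_s^2\,ds\,\big|\,\mathcal{F}^B_{t_{i+1},T}\big]=0$.

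Two points need attention. First, your Taylor expansion around the right endpoint produces the correction term $-\frac{1}{2}\int_0^t\varphi''(\alpha_s)\gamma_s^2\,ds$, not $-\int_0^t\varphi''(\alpha_s)\gamma_s^2\,ds$ as printed in the lemma, so you cannot say the quadratic sum converges to ``the stated correction integral'': it converges to half of it. The version with the factor $\frac{1}{2}$ is the correct one --- it is what Pardoux--Peng prove, and it is what the paper itself uses when it applies this lemma in the proof of Theorem 4.3, where the term $-\frac{1}{2}Y_t\big(-h_t\,d\overleftarrow{B}_t\big)^2=-\frac{1}{2}Y_t h_t^2\,dt$ carries the $\frac{1}{2}$ --- so your argument establishes the right formula, but you should state explicitly that the displayed statement is missing the factor $\frac{1}{2}$ rather than silently matching your limit to it. Second, the localization via $\tau_n=\inf\{s:|\alpha_s|+\int_0^s(|\beta_r|^2+|\gamma_r|^2)\,dr\ge n\}\wedge T$ is not legitimate in the backward setting: $\alpha_s$ and $\gamma_s$ are $\mathcal{F}^B_{s,T}$-measurable, the event $\{\tau_n>s\}$ is not $\mathcal{F}^B_{s,T}$-measurable, and the stopped integrand $\gamma_s\mathbf{1}_{\{s\le\tau_n\}}$ is therefore no longer backward-adapted, so its backward It\^{o} integral is undefined. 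The fix is standard: since $\beta,\gamma$ are already in $L^2$ no pointwise truncation of them is needed; truncate only $\varphi$ outside $[-n,n]$, prove the formula for bounded $\varphi,\varphi',\varphi''$, and note that both sides of the resulting identity agree on the event $\{\sup_{s\le t}|\alpha_s|\le n\}$, whose union over $n$ has full probability by path continuity of $\alpha$.
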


\begin{lemma}[{\O}ksendal \cite{Oksendal2003}, Lemma 4.3.2]\label{0.2}
The linear space of random variables of the type
\bel{0.2.2}\exp\bigg\{\int_0^Th_td\overleftarrow{B}_t
-\frac{1}{2}\int_0^Th^2_tdt\bigg\};\qq h\in L^2[0,T]~(deterministic)\ee
is dense in $L^2(\cF_{0,T}^B,\dbP)$.
\end{lemma}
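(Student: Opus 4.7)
The plan is to establish density via the orthogonal complement criterion in the Hilbert space $L^2(\cF_{0,T}^B,\dbP)$: it suffices to show that if $g\in L^2(\cF_{0,T}^B,\dbP)$ satisfies $\dbE[g\,\xi_h]=0$ for every random variable $\xi_h$ of the form \eqref{0.2.2}, then $g=0$ almost surely. I would then exhibit a rich enough subfamily of deterministic integrands $h$ to force $g=0$.

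First, I would restrict attention to step functions $h=\sum_{j=1}^n c_j\mathbf{1}_{(t_{j-1},t_j]}$ subordinate to a partition $0=t_0<t_1<\cdots<t_n=T$. For such deterministic $h$ no adaptedness obstruction arises, and $\int_0^T h_t\,d\overleftarrow{B}_t$ reduces to a linear combination of the increments $\Delta_j B:=B_{t_j}-B_{t_{j-1}}$; the factor $\exp\{-\tfrac12\int_0^Th_t^2\,dt\}$ is a nonzero deterministic constant that can be divided out. The orthogonality hypothesis therefore becomes
$$\dbE\Big[g\,\exp\Big\{\sum_{j=1}^n c_j\Delta_jB\Big\}\Big]=0,\qquad \forall\,(c_1,\dots,c_n)\in\dbR^n.$$

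Next, for each fixed partition $\pi$, I would extend the map to $\Phi_\pi:\dbC^n\to\dbC$, $\Phi_\pi(z)=\dbE[g\exp\{\sum_jz_j\Delta_jB\}]$. Cauchy--Schwarz combined with the Gaussian moment generating function yields $|\Phi_\pi(z)|\le\|g\|_{L^2}\,\dbE[\exp\{2\mathrm{Re}\sum_jz_j\Delta_jB\}]^{1/2}<\infty$; differentiation under the expectation, justified by dominated convergence with Gaussian dominants, shows $\Phi_\pi$ is entire on $\dbC^n$. Since $\Phi_\pi\equiv0$ on $\dbR^n$, the identity theorem for holomorphic functions of several variables forces $\Phi_\pi\equiv0$ on $i\dbR^n$, giving
$$\dbE\Big[g\,\exp\Big\{i\sum_{j=1}^n c_j\Delta_jB\Big\}\Big]=0,\qquad \forall\,(c_1,\dots,c_n)\in\dbR^n.$$
Decomposing $g=g^+-g^-$ and invoking uniqueness of Fourier transforms of finite signed measures on $\dbR^n$, I conclude that the signed measure $g\,d\dbP$ vanishes on $\sigma(\Delta_1B,\dots,\Delta_nB)$, that is, $\dbE[g\mid\sigma(\Delta_1B,\dots,\Delta_nB)]=0$ a.s.

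Finally, I would pick a sequence of partitions $\pi_n$ with mesh tending to zero and set $\cG_n=\sigma(\{B_t:t\in\pi_n\})$. By continuity of Brownian paths, $\cG_n\uparrow\cF_{0,T}^B$ modulo null sets. L\'evy's upward martingale convergence theorem then yields
$$g=\dbE[g\mid\cF_{0,T}^B]=\lim_{n\to\infty}\dbE[g\mid\cG_n]=0 \qquad\text{in }L^2,$$
which completes the argument. The main obstacle is the analytic-continuation step: one must verify carefully that $\Phi_\pi$ is genuinely entire and that one can legitimately pass from vanishing of the moment generating function to vanishing of the characteristic function; a secondary subtlety is checking that the backward stochastic integral $\int_0^T h_t\,d\overleftarrow{B}_t$ with deterministic step $h$ coincides with the elementary Wiener sum $\sum_j c_j\Delta_j B$ under the convention used by Pardoux--Peng, which is what makes the reduction to Gaussian Fourier analysis clean.
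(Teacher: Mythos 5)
Your argument is correct and is precisely the classical proof of {\O}ksendal's Lemma 4.3.2 (orthogonal-complement criterion, reduction to step functions, analytic continuation of the moment generating function to the characteristic function, Fourier uniqueness, and martingale convergence along refining partitions), which is exactly the proof the paper defers to: the paper's own remark reduces the lemma to the forward case via the observation that backward and forward It\^{o} integrals coincide for deterministic integrands and that $\cF_{0,T}^B=\cF_T^B$, then cites {\O}ksendal verbatim. The only cosmetic point is that your partitions $\pi_n$ should be taken nested (with dense union) so that L\'evy's upward theorem applies directly.
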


\begin{remark} (i)  Lemma \ref{0.1} is a simplified version of Lemma 1.3 of Pardoux-Peng \cite{Peng}.

(ii) For Lemma \ref{0.2}, note that the integrand $h$ in the backward It\^{o} integral \rf{0.2.2} is a deterministic function. In fact, when the integrand $h$ is a deterministic function, the forward It\^{o} integral and backward It\^{o} integral are coincide (see e.g., Pardoux-Protter \cite{PP1987}). Moreover, the filtration $\cF_{0,T}^B$ is the same as $\cF_{T}^B$ from the definition. Therefore, Lemma \ref{0.2} is the same as Lemma 4.3.2 of {\O}ksendal \cite{Oksendal2003}, and the proof of
Lemma \ref{0.2} is certainly the same as the proof of Lemma 4.3.2 of {\O}ksendal \cite{Oksendal2003}.
\end{remark}

Now we prove the backward It\^{o} martingale representation theorem.

\begin{theorem}\label{0.3}
Let $F\in L^2(\cF_{0,T}^B,\dbP)$. Then there exists a unique stochastic process
$f\in L^2_{\cF^{B}}(\Om\ts[0,T];\dbR)$ such that
\bel{0.4}F=\dbE[F]+\int_0^Tf_td\overleftarrow{B}_t.\ee
\end{theorem}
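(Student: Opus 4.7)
The plan is to mirror the classical proof of the It\^{o} martingale representation theorem (as in \O ksendal \cite{Oksendal2003}): combine Lemma \ref{0.2} (density of exponentials) with Lemma \ref{0.1} (backward It\^{o} formula) to treat a dense subclass of random variables, then extend to all of $L^2(\cF_{0,T}^B,\dbP)$ by an $L^2$-isometry argument. Uniqueness will be dispatched separately using the same isometry.

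Uniqueness is the easiest step. If $f, f' \in L^2_{\cF^B}(\Om\ts[0,T];\dbR)$ both satisfy \rf{0.4}, subtracting the two representations gives $\int_0^T (f_t - f'_t) d\overleftarrow{B}_t = 0$; the backward It\^{o} isometry (from Pardoux-Peng \cite{Peng}) then forces $\dbE\int_0^T |f_t - f'_t|^2 dt = 0$, so $f = f'$ in $L^2_{\cF^B}(\Om\ts[0,T];\dbR)$.

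For existence on the dense subclass, I would fix a deterministic $h \in L^2[0,T]$ and consider the exponential $F_h = \exp\{\int_0^T h_s d\overleftarrow{B}_s - \frac{1}{2}\int_0^T h_s^2 ds\}$ appearing in \rf{0.2.2}. The idea is to introduce the process $Y_t = \exp\{\int_t^T h_s d\overleftarrow{B}_s - \frac{1}{2}\int_t^T h_s^2 ds\}$, which satisfies $Y_0 = F_h$, $Y_T = 1 = \dbE[F_h]$, and admits the expansion $Y_t = Y_0 + \int_0^t \frac{1}{2} h_s^2 ds + \int_0^t (-h_s) d\overleftarrow{B}_s$ required as input to Lemma \ref{0.1}. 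Applying Lemma \ref{0.1} with $\phi(x) = e^x$ produces an identity expressing $Y_T - Y_0$ as a $ds$-integral plus a backward It\^{o} integral with explicit integrands; rearranging at $t = T$ yields a representation $F_h = 1 + \int_0^T f^h_s d\overleftarrow{B}_s$ for an explicit process $f^h \in L^2_{\cF^B}(\Om\ts[0,T];\dbR)$. By linearity, every finite linear combination of such exponentials admits a representation of the required form.

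To pass to arbitrary $F \in L^2(\cF_{0,T}^B,\dbP)$, Lemma \ref{0.2} yields a sequence $F_n \to F$ in $L^2$ of finite linear combinations of exponentials, each with a representation $F_n = \dbE[F_n] + \int_0^T f^n_t d\overleftarrow{B}_t$. The backward It\^{o} isometry applied to $F_n - F_m$ shows that $\{f^n\}$ is Cauchy in $L^2_{\cF^B}(\Om\ts[0,T];\dbR)$, and its $L^2$-limit $f$ supplies \rf{0.4} after passage to the limit. The main obstacle I anticipate lies in the second step: because Lemma \ref{0.1} carries the distinctive $-\int_0^t \phi''(\a_s) \gamma_s^2 ds$ correction in place of the classical $+\frac{1}{2}$ term, the drift appearing in the identity for $Y_t$ is not obviously zero, so the sign bookkeeping needed to isolate the backward It\^{o} integral and confirm that $f^h$ has the correct expectation (consistent with $\dbE[F_h] = 1$) must be carried out with care.
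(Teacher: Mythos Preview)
Your proposal is correct and follows essentially the same route as the paper: both verify the representation on the exponential class via the backward It\^{o} formula (Lemma \ref{0.1}), then extend by the density in Lemma \ref{0.2} and the backward It\^{o} isometry, with uniqueness handled by the same isometry. One small slip: the expansion you write as ``$Y_t = Y_0 + \int_0^t \tfrac{1}{2}h_s^2\,ds + \int_0^t(-h_s)\,d\overleftarrow{B}_s$'' is the decomposition of the \emph{exponent} $\alpha_t := \int_t^T h_s\,d\overleftarrow{B}_s - \tfrac{1}{2}\int_t^T h_s^2\,ds$, not of $Y_t = e^{\alpha_t}$ itself; with that corrected, Lemma \ref{0.1} applied with $\phi(x)=e^x$ gives exactly the paper's computation $dY_t = -Y_t h_t\,d\overleftarrow{B}_t$, and the drift cancellation you were worried about goes through cleanly.
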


\begin{proof}
First assume that $F$ has the form \rf{0.2.2}, i.e.,
$$F(\o)=\exp\bigg\{\int_0^Th_td\overleftarrow{B}_t(\o)
-\frac{1}{2}\int_0^Th^2_tdt\bigg\};\qq h\in L^2[0,T]$$
for some $h\in L^2[0,T]$ (deterministic). Define
$$Y_t(\o)=\exp\bigg\{\int_t^Th_td\overleftarrow{B}_t(\o)
-\frac{1}{2}\int_t^Th^2_tdt\bigg\};\qq 0\leq t\leq T.$$
Then by It\^{o}'s formula (Lemma \ref{0.1})
$$dY_t=Y_t\big(-h_td\overleftarrow{B}_t+\frac{1}{2}h^2_tdt\big)-\frac{1}{2}Y_t\big(-h_td\overleftarrow{B}_t\big)^2
=-Y_th_td\overleftarrow{B}_t.$$
So that
$$Y_T=Y_t-\int_t^TY_sh_sd\overleftarrow{B}_s,$$
or
$$\ba{ll}
\ds Y_t=Y_T+\int_t^TY_sh_sd\overleftarrow{B}_s\\
\ns\ds\ \ \ \ =1+\int_t^TY_sh_sd\overleftarrow{B}_s.
\ea$$
Therefore
$$F=Y_0=1+\int_0^TY_sh_sd\overleftarrow{B}_s,$$
and hence $\dbE[F]=1$. So \rf{0.4} holds in this case.

\ms

If $F\in L^2(\cF_{0,T}^B,\dbP)$ is arbitrary, we can by Lemma \ref{0.2} approximate $F$ in $L^2(\cF_{0,T}^B,\dbP)$ by linear combinations $F_n$ of functions of the form \rf{0.2.2}. Then for each $n$ we have
$$F_n=\dbE[F_n]+\int_0^Tf_n(t)d\overleftarrow{B}(t),\qq where \ \ f_n\in\cV(0,T).$$
By the It\^{o} isometry
$$\ba{ll}
\ds \dbE\big[(F_n-F_m)^2\big]=\dbE\bigg[ \bigg(\dbE[F_n-F_m]+\int_0^T(f_n(t)-f_m(t))d\overleftarrow{B}(t)\bigg)^2\bigg]\\
\ns\ds\qq\qq\qq \ \ =\big(\dbE[F_n-F_m]\big)^2+\int_0^T\dbE[(f_n(t)-f_m(t))^2]dt
\rightarrow 0\qq as \ \ n,m\rightarrow \i.
\ea$$
So $\{f_n\}$ is a Cauchy sequence in $L^2([0,T]\ts\Omega)$ and hence converges to some $f\in L^2([0,T]\ts\Omega)$.
Since $f_n\in\cV(0,T)$ we have  $f\in\cV(0,T)$. Again using the It\^{o} isometry we see that
$$F=\lim_{n\ra \i}F_n=\lim_{n\ra \i}\bigg(\dbE[F_n]+\int_0^Tf_nd\overleftarrow{B}\bigg)
=\dbE[F]+\int_0^Tfd\overleftarrow{B},$$
the limit being taken in $L^2(\cF_{0,T}^B,\dbP)$. Hence the representation \rf{0.4} holds for all
$F\in L^2(\cF_{0,T}^B,\dbP)$.

\ms

The uniqueness follows from the It\^{o} isometry: Suppose
$$F=\dbE[F]+\int_0^Tf_1(t)d\overleftarrow{B}(t)=\dbE[F]+\int_0^Tf_2(t)d\overleftarrow{B}(t)$$
with $f_1,f_2\in\cV(0,T)$. Then
$$0=\dbE\bigg[\bigg(\int_0^T\Big(f_1(t)-f_2(t)\Big)d\overleftarrow{B}(t)\bigg)^2\bigg]
=\int_0^T\dbE\Big[\big(f_1(t)-f_2(t)\big)^2\Big]dt$$
and therefore $f_1(t,\o)=f_2(t,\o)$ for a.a. $(t,\o)\in[0,T]\ts\Om$.
\end{proof}

\begin{theorem}\label{0.5}
Suppose $M_t$ is an $\cF^{B}_{t,T}$-martingale and that $M_t\in L^2(\dbP)$ for all $0\leq t\leq T$.
Then there exists a unique stochastic process $f\in L^2_{\cF^{B}}(\Om\ts[0,T];\dbR)$ such that
\bel{0.6}M_t=\dbE[M_T]+\int_t^Tf_sd\overleftarrow{B}_s,\qq a.s., \ for \ all \ t\in[0,T].\ee
\end{theorem}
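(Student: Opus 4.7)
The plan is to reduce Theorem \ref{0.5} to Theorem \ref{0.3} by the standard projection argument, adapted to the backward filtration. Since $M_T\in L^2(\cF_{0,T}^B,\dbP)$, Theorem \ref{0.3} supplies a unique $f\in L^2_{\cF^B}(\Om\ts[0,T];\dbR)$ with
\begin{equation*}
M_T=\dbE[M_T]+\int_0^T f_s\,d\overleftarrow{B}_s.
\end{equation*}
Taking $\dbE[\,\cdot\mid\cF^B_{t,T}]$ of both sides and invoking the backward martingale identity $M_t=\dbE[M_T\mid\cF^B_{t,T}]$, the theorem is reduced to evaluating the conditional expectation of the backward It\^o integral on the right.

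I would split $\int_0^T=\int_0^t+\int_t^T$. The piece $\int_t^T f_s\,d\overleftarrow{B}_s$ is already $\cF^B_{t,T}$-measurable, because for each $s\in[t,T]$ the integrand $f_s$ is $\cF^B_{s,T}$-measurable (hence $\cF^B_{t,T}$-measurable) and the relevant Brownian increments all live inside $[t,T]$; this piece therefore emerges from the conditional expectation unchanged. The decisive step is the claim
\begin{equation*}
\dbE\Big[\int_0^t f_s\,d\overleftarrow{B}_s\,\Big|\,\cF^B_{t,T}\Big]=0,
\end{equation*}
which, once established, immediately yields the representation \rf{0.6}.

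To justify this claim I would first treat simple backward-adapted integrands $f_s=\sum_i \xi_i\mathbf{1}_{(s_i,s_{i+1}]}(s)$ with $s_{i+1}\leq t$ and $\xi_i$ being $\cF^B_{s_{i+1},T}$-measurable, in which case $\int_0^t f_s\,d\overleftarrow{B}_s=\sum_i\xi_i(B_{s_{i+1}}-B_{s_i})$. For each summand the increment $B_{s_{i+1}}-B_{s_i}$ is independent of $\cF^B_{s_{i+1},T}$, which contains both $\xi_i$ and $\cF^B_{t,T}$; hence conditioning on $\cF^B_{t,T}$ allows me to pull out $\xi_i$ and leaves the factor $\dbE[B_{s_{i+1}}-B_{s_i}]=0$. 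The general case then follows from density of simple processes in $L^2_{\cF^B}$, the backward It\^o isometry, and $L^2$-continuity of conditional expectation, in exact analogy with the approximation step used to prove Theorem \ref{0.3}.

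Finally, uniqueness is read off from the backward It\^o isometry just as in the proof of Theorem \ref{0.3}: if two integrands $f^{(1)},f^{(2)}$ both yield \rf{0.6} for every $t$, then evaluating at $t=0$ gives $\int_0^T(f^{(1)}_s-f^{(2)}_s)\,d\overleftarrow{B}_s=0$ a.s., and the isometry forces $f^{(1)}=f^{(2)}$ in $L^2_{\cF^B}(\Om\ts[0,T];\dbR)$. I expect the genuine obstacle to be the conditional-expectation identity above, i.e., the backward-martingale property of the backward It\^o integral, since this is where the backward-filtration structure must be handled from scratch using independent increments; everything else is a direct transcription of the classical forward argument.
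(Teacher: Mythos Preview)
Your reduction has the right spirit but the wrong endpoint, and this is a genuine gap. In the backward filtration $\{\cF^B_{t,T}\}_{t\in[0,T]}$ the $\sigma$-algebras \emph{shrink} as $t$ increases: $\cF^B_{T,T}$ is trivial, while $\cF^B_{0,T}$ is the full Brownian $\sigma$-algebra. Consequently $M_T$ is $\cF^B_{T,T}$-measurable and hence a.s.\ constant; applying Theorem~\ref{0.3} to $F=M_T$ yields $f\equiv 0$, and your scheme would then conclude $M_t\equiv\dbE[M_T]$ for all $t$. Relatedly, the ``backward martingale identity'' you invoke is reversed: the correct statement is $M_t=\dbE[M_0\mid\cF^B_{t,T}]$ (from $M_{t_2}=\dbE[M_{t_1}\mid\cF^B_{t_2,T}]$ for $t_1\le t_2$), not $M_t=\dbE[M_T\mid\cF^B_{t,T}]$.

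The fix is simple: run your argument with $M_0$ in place of $M_T$. Theorem~\ref{0.3} applied to $F=M_0\in L^2(\cF^B_{0,T},\dbP)$ gives $M_0=\dbE[M_0]+\int_0^T f_s\,d\overleftarrow{B}_s$; since $\dbE[M_0]=\dbE[M_T]$, conditioning on $\cF^B_{t,T}$ and using $M_t=\dbE[M_0\mid\cF^B_{t,T}]$ together with your (correct) claim $\dbE[\int_0^t f_s\,d\overleftarrow{B}_s\mid\cF^B_{t,T}]=0$ yields \eqref{0.6}. With this correction your route is in fact slightly more direct than the paper's, which instead applies Theorem~\ref{0.3} separately to each $M_t$ to obtain integrands $f^t$ on $[t,T]$ and then uses the backward-martingale property to show the $f^t$ are consistent and patch into a single $f$. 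Both arguments hinge on the same conditional-expectation identity for the backward It\^o integral that you isolate.
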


\begin{proof}
By Theorem \ref{0.3} applied to $F=M_t$, we have that for all $t\in[0,T]$,
there exists a unique $f^t(s,\o)\in L^2(\cF_{t,T}^B,\dbP)$ such that
$$M(t)=\dbE[M(t)]+\int_t^Tf^t(s)d\overleftarrow{B}(s)
      =\dbE[M(T)]+\int_t^Tf^t(s)d\overleftarrow{B}(s).$$
Now assume $0\leq t_1\leq t_2\leq T$. Then
$$\ba{ll}
\ds M(t_2)=\dbE\Big[M_{t_1}\Big|\cF^B_{t_2,T}\Big]\\
\ns\ds\qq \ \ =\dbE[M(T)]+\dbE\Big[\int_{t_1}^Tf^{t_1}(s)d\overleftarrow{B}(s)\Big|\cF^B_{t_2,T}\Big]\\
\ns\ds\qq \ \ =\dbE[M(T)]+\int_{t_2}^Tf^{t_1}(s)d\overleftarrow{B}(s).
\ea$$
But we also have
$$M(t_2)=\dbE[M(T)]+\int_{t_2}^Tf^{t_2}(s)d\overleftarrow{B}(s).$$
Hence, by comparing the above two equations we get that
$$0=\dbE\bigg[\bigg(\int_{t_2}^T\big(f^{t_1}(s)-f^{t_2}(s)\big)d\overleftarrow{B}(s)\bigg)^2\bigg]
   =\int_{t_2}^T\dbE\big[\big(f^{t_1}(s)-f^{t_2}(s)\big)^2\big]ds$$
and therefore
$$f^{t_1}(s,\o)=f^{t_2}(s,\o)\qq for \ a.a. \ (s,\o)\in [t_2,T]\ts\Om.$$
So we can define $f(s,\o)$ for a.a. $(s,\o)\in [0,T]\ts\Om$ by setting
$$f(s,\o)=f^N(s,\o)\qq if \ s\in[N,T],$$
and then we get
$$M(t)=\dbE[M(T)]+\int_t^Tf^t(s)d\overleftarrow{B}(s)
=\dbE[M(T)]+\int_t^Tf(s)d\overleftarrow{B}(s)\qq for \ all \ t\in[0,T].$$
So the backward It\^{o} martingale representation theorem is obtained.
\end{proof}



\bibliographystyle{elsarticle-num}

\begin{thebibliography}{99}

\bibitem{Agram} Agram N, {\O}ksendal B.
Malliavin calculus and optimal control of stochastic Volterra equations.
J Optim Theory Appl, 2015, 167: 1070-1094.

\bibitem{Anh} Anh V, Grecksch W, Yong J.
Regularity of backward stochastic Volterra integral equations in Hilbert spaces.
Stoch Anal Appl, 2011, 29: 146-168.

\bibitem{Djordjevic} Djordjevi\'{c} J, Jankovi\'{c} S.
Backward stochastic Volterra integral equations with additive perturbations.
Appl Math Comput, 2015, 265: 903-910.

\bibitem{El}
El Karoui N, Huang S.
A general result of existence and uniqueness of backward stochastic differential equations.
Pitman Res Notes Math Ser, 1997, 27-38.

\bibitem{Han}
Han Y, Peng S, Wu Z.
Maximum Principle for Backward Doubly Stochastic Control Systems with Applications.
SIAM J Control Optim, 2010, 48: 4224-4241.

\bibitem{Eduard}
Kromer E, Overbeck L.
Classical differentiability of BSVIEs and dynamic capital allocations.
2015, Available at SSRN: http://ssrn.com/abstract=2379500

\bibitem{Lin}
Lin J.
Adapted solution of a backward stochastic nonlinear Volterra integral equations.
Stochastic Anal Appl, 2002, 20: 165-183.

\bibitem{Oksendal2003}
B. {\O}ksendal.
Stochastic Differential Equations (6th edition).
\textit{Springer}, 2003.

\bibitem{PP1987}
Pardoux E., Protter P.
Two-sided stochastic integral and calculus.
Probab. Theory Relat. Fields 1987, 76: 15-50.

\bibitem{Peng}
Pardoux E, Peng S.
Backward doubly stochastic differential equations and systems of quasilinear SPDEs.
Probab Theory Relat Fields, 1994, 98: 209-227.

\bibitem{Shi3}
Peng S, Shi Y.
A type of time-symmetric forward-backward stochastic differential equations.
C R Acad Sci Paris Ser I, 2003, 336: 773-778.

\bibitem{Ren}
Ren Y.
On solutions of backward stochastic Volterra integral equations with jumps in Hilbert spaces.
J Optim Theory Appl, 2010, 114: 319-333.


\bibitem{Raffoul} Raffoul Y, Ren D.
Theorems on boundedness of solutions to stochastic delay differential equations.
 Electron J Differential Equations, 2016, 194: 1-14.

\bibitem{Shi}
Shi Y, Gu Y, Liu K.
Comparison Theorems of Backward Doubly Stochastic Differential Equations and Applications.
Stoch Anal Appl, 2005, 23: 97-110.

\bibitem{Wang}
Shi Y, Wang T.
Solvability of general backward stochastic Volterra integral equations.
J Korean Math Soc, 2012, 49: 1301-1321.

\bibitem{Shi2}
Shi Y, Wang T, Yong J.
Mean-field backward stochastic Volterra integral equations.
Dis Cont Dyn System Ser B, 2013, 18: 1929-1967.

\bibitem{Shi4}
Shi Y, Wang T, Yong J.
Optimal Control Problems of Forward-Backward Stochastic Volterra Integral Equations.
Math Control Related Fields, 2015, 5: 613-649.

\bibitem{Shi5}
Shi Y, Zhu Q.
A Kneser-type theorem for backward doubly stochastic differential equations.
Dis Cont Dyn Syst Series B, 2010, 14: 1565-1579.

\bibitem{Shi6}
Shi Y, Zhu Q.
Partially Observed Optimal Controls of Forward-Backward Doubly Stochastic Systems.
ESAIM COCV, 2013, 19: 828-843.

\bibitem{Wen}
Shi Y, Wen J, Xiong J.
  Backward doubly stochastic Volterra integral equations and applications to optimal control problems,
  Arxiv.org/abs/1906.10582.

\bibitem{Wang2}
Wang T, Shi Y.
Symmetrical solutions of backward stochastic Volterra integral equations and their applications.
Dis Cont Dyn System Ser B, 2010, 14: 251-274.

\bibitem{Wang3}
Wang T, Shi Y.
Linear quadratic stochastic integral games and related topics.
Sci China Math, 2015, 58: 2405-2420.

\bibitem{Yong}
Wang T, Yong J.
Comparison theorems for some backward stochastic Volterra integral equations.
Stochastic Process Appl, 2015, 125: 1756-1798.

\bibitem{Wangz} Wang Z, Zhang X.
 Non-Lipschitz backward stochastic Volterra type equations with jumps.
 Stoch Dyn, 2007, 7: 479-496.

\bibitem{Yong2}
Yong J.
Backward stochastic Volterra integral equations and some related problems.
Stochastic Process Appl, 2006, 116: 779-795.

\bibitem{Yong3}
Yong J.
Continuous-time dynamic risk measures by backward stochastic Volterra integral equations.
Appl Anal, 2007, 86: 1429-1442.

\bibitem{Yong4}
Yong J.
Well-posedness and regularity of backward stochastic Volterra integral equations.
Probab Theory Relat Fields, 2008, 142: 21-77.

\bibitem{Yong5} Yong J.
The representation of adapted solutions for backward stochastic Volterra integral equations (Chinese).
Sci China Math, 2017, 47: 1355-1366.

\bibitem{Shi15}
Zhu Q, Wang X, Shi Y.
Maximum principles for backward doubly stochastic systems with jumps and applications.
 Sci China Math, 2013, 43: 1237-1257.

\bibitem{Zhangx}
Zhang X.
Stochastic Volterra equations in Banach spaces and stochastic partial differential equation.
J Funct Anal, 2010, 258: 1361-1425.

\end{thebibliography}

\end{document}